\newcommand{\roundb}[1]{\left(#1\right)} 
\newcommand{\Hproduct}[1]{%
  \roundb{\begin{smallmatrix}#1\end{smallmatrix}}
} %
\newcommand{\LHproduct}[1]{%
  \roundb{\begin{matrix}#1\end{matrix}}
} %
\newcommand{\abs}[1]{\vert #1 \vert}
\newcommand{\norm}[1]{\left\Vert #1 \right\Vert}
\newcommand{\bignorm}[1]{\bigl\Vert #1 \bigr\Vert}
\newcommand{\R}{\mathbb{R}}
\newcommand{\angles}[1]{\langle #1 \rangle}
\newcommand{\bigangles}[1]{\big\langle #1 \big\rangle}
\DeclareMathOperator{\diag}{diag}
\newtheorem{theorem}{Theorem}
\newtheorem{lemma}{Lemma}
\theoremstyle{definition}
\newtheorem{definition}{Definition}
\theoremstyle{remark}
\newtheorem{remark}{Remark}
\numberwithin{equation}{section}
\title[Yang-Mills in Lorenz gauge]{Null structure and local well-posedness in the energy class for the Yang-Mills equations in Lorenz gauge}
\author{Sigmund Selberg}
\address{Department of Mathematical Sciences\\ Norwegian University of Science and Technology\\ N-7491 Trondheim\\ Norway}
\email{sselberg@math.ntnu.no}
\urladdr{www.math.ntnu.no/~sselberg}
\thanks{The first author was supported by the Research Council of Norway, grant no.~213474/F20.}
\author{Achenef Tesfahun}
\address{Department of Mathematical Sciences\\ Norwegian University of Science and Technology\\ N-7491 Trondheim\\ Norway}
\subjclass[2000]{35Q40; 35L70}
\begin{document}

\begin{abstract} 
We demonstrate null structure in the Yang-Mills equations in Lorenz gauge. Such structure was found in Coulomb gauge by Klainerman and Machedon, who used it to prove global well-posedness for finite-energy data. Compared with Coulomb gauge, Lorenz gauge has the advantage---shared with the temporal gauge---that it can be imposed globally in space even for large solutions. Using the null structure and bilinear space-time estimates, we also prove local-in-time well-posedness of the equations in Lorenz gauge, for data with finite energy. The time of existence depends on the initial energy and on the $H^s \times H^{s-1}$-norm of the initial potential, for some $s < 1$.
\end{abstract}

\maketitle

\tableofcontents


\section{Introduction}\label{Intro}

Let $G$ be a compact Lie group and $\mathfrak g$ its Lie algebra. Without loss of generality (see \cite[Theorem 3.28]{Sepanski2007}) we may assume that $G$ is a closed subgroup of $GL(n,\mathbb C)$, hence $\mathfrak g$ has an explicit matrix representation as a subalgebra of $\mathfrak{gl}(n,\mathbb C) = M_{n,n}(\mathbb C)$ (see \cite[Theorem 4.6]{Sepanski2007}), where $M_{n,n}(\mathbb C)$ denotes the set of all $n \times n$ matrices with complex entries.

Given a $\mathfrak g$-valued 1-form $A$ on the Minkowski space-time $\R^{1+3}$, we denote by $F=F[A]$ the associated curvature $F=dA + [A,A]$. That is, given $A_\alpha \colon \R^{1+3} \to \mathfrak g$ for $\alpha \in \{0,1,2,3\}$, define $F_{\alpha\beta} = F[A]_{\alpha\beta} \colon \R^{1+3} \to \mathfrak g$ for $\alpha,\beta \in \{0,1,2,3\}$ by
\begin{equation}\label{Curvature}
  F_{\alpha\beta} = F[A]_{\alpha\beta} = \partial_\alpha A_\beta - \partial_\beta A_\alpha + [A_\alpha,A_\beta],
\end{equation}
where $[\cdot,\cdot]$ denotes the matrix commutator.

In this set-up, the \emph{Yang-Mills equations} read
\begin{equation}\label{YM}
  \partial^\alpha F_{\alpha\beta} + [A^\alpha,F_{\alpha\beta}] = 0 \qquad (\beta \in \{0,1,2,3\}).
\end{equation}
Here the Minkowski metric $\diag(-1,1,1,1)$ on $\R^{1+3}$ is used to raise and lower tensorial indices, thus $A^0=-A_0$ and $A^j = -A_j$ for $j \in \{1,2,3\}$. The coordinates on $\R^{1+3}$ are denoted $(x^0,x^1,x^2,x^3)$, where $t=x^0$ is time and $x=(x^1,x^2,x^3)$ is the spatial variable; $\partial_\alpha$ is the partial derivative with respect to $x^\alpha$, and in particular $\partial_0=\partial_t$ is the time derivative; the spatial gradient $(\partial_1,\partial_2,\partial_3)$ will be denoted $\nabla$, and $\partial = (\partial_t,\nabla)$ is the full space-time gradient. In compliance with Einstein's summation convention, $\alpha$ is implicitly summed over $\{0,1,2,3\}$ in \eqref{YM}; in general, repeated upper/lower greek indices $\alpha,\beta,\gamma,\dots$ (resp.~latin indices $i,j,k,\dots$) are implicitly summed over $\{0,1,2,3\}$ (resp.~$\{1,2,3\}$).

One can think of the Yang-Mills equations as a non-abelian generalization of Maxwell's equations. Indeed, if $G$ is the circle group $U(1)$ (hence $\mathfrak g = \R$), then~\eqref{YM} reduces to Maxwell's equations in vacuum, $\partial^\alpha F_{\alpha\beta} = 0$, for the electromagnetic tensor $F_{\alpha\beta}$. As in the Maxwell theory, it is $F$ that is the interesting quantity, whereas $A$ can be thought of as a potential representing $F$. Moreover, $A$ is not unique, in view of the invariance of the equations under the \emph{gauge transformation}
\begin{equation}\label{GaugeTransformation}
  A_\alpha \longrightarrow A'_\alpha = U A_\alpha U^{-1} - (\partial_\alpha U) U^{-1},
\end{equation}
for a given, sufficiently smooth, $U \colon \R^{1+3} \to G$. Then $F'=F[A']$ is \emph{gauge equivalent} to $F=F[A]$ in the sense that
$$
  F' = U F U^{-1}.
$$
If $(A,F)$ satisfies \eqref{YM}, then so does $(A',F')$, as one can see by reformulating \eqref{YM} in the gauge covariant form,
$$
  D_{(A)}^\alpha F_{\alpha\beta} = 0.
$$
Here $D_{(A)}^\alpha X = \partial^\alpha X + [A^\alpha,X]$ is the gauge covariant derivative, with the property that $X'=UXU^{-1}$ implies $D_{(A')} X' = U [ D_{(A)}X ] U^{-1}$.

\subsection{Gauge conditions}

Identifying gauge equivalent solutions, one has \emph{gauge freedom}, that is, the freedom to choose a representative $(A,F)$ from a given equivalence class. Thus one may complement \eqref{YM} by a condition on $A$, a \emph{gauge condition}. The classical gauge conditions are
\begin{itemize}
  \item temporal: $A_0=0$,
  \item Coulomb: $\partial^i A_i=0$,
  \item Lorenz: $\partial^\alpha A_\alpha=0$,
\end{itemize}
each of which has its advantages and drawbacks.

In both the temporal and Lorenz gauges, the Yang-Mills equations can be expressed as nonlinear wave equations,\footnote{In temporal gauge, $F$ satisfies a nonlinear wave equation where the right-hand side depends on $F(t,x)$, $\int_0^t F(s,x) \, ds$ and their first-order spatial partial derivatives.}, so by classical methods, the Cauchy problem is solvable locally in time for sufficiently regular initial data. For the low-regularity theory, however, it is well-known that null structure plays a crucial role. Klainerman and Machedon \cite{Klainerman1995} discovered null structure in the Coulomb-gauge formulation of the equations. In Coulomb gauge, however, it is not possible to work globally in space unless a smallness condition is imposed, due to the appearance of some nonlinear elliptic equations in the Yang Mills system. For this reason, Klainerman and Machedon worked in temporal gauge, but used local gauge transformations into Coulomb gauge to obtain good estimates for the solution. This leads to considerable technical difficulties.

In summary, there are two properties one would like to have:
\begin{itemize}
\item The Cauchy problem is easy to state and solve globally in space (and locally in time) for data with higher regularity.
\item There is null structure, which allows one to obtain a satisfactory low-regularity theory.
\end{itemize}
Coulomb gauge has the second property, but not the first. Temporal gauge has the first property, and there is also some partial null structure, as proved by Tao \cite{Tao2003}, but the resulting low-regularity theory is limited to small-norm data.

In this paper, we propose to use instead the Lorenz gauge. We demonstrate the presence of null structure in the nonlinear wave equations that appear in this gauge. Thus, Lorenz gauge has both the desirable properties stated above.

Lorenz-gauge null structure was first discovered in \cite{Selberg2010a} for the Maxwell-Dirac equations , and then for the Maxwell-Klein-Gordon equations \cite{Selberg2010b} (see also \cite{Pecher2013}). Subsequently, such structure has been found and applied also in other gauge field theories; see \cite{Bournaveas2012, Selberg2013, Huh2012}.

\subsection{The Cauchy problem} Consider initial data at $t=0$,
$$
  A_\alpha(0) = a_\alpha, \qquad \partial_t A_\alpha(0) = \dot a_\alpha.
$$
Thus,
$$
  F_{\alpha\beta}(0) = f_{\alpha\beta},
$$
where
\begin{equation}\label{f}
\left\{
\begin{aligned}
  f_{ij} &= \partial_i a_j - \partial_j a_i + [a_i,a_j],
  \\
  f_{0i} &= \dot a_i - \partial_i a_0 + [a_0,a_i].
\end{aligned}
\right.
\end{equation}
Note that \eqref{YM} with $\beta=0$ imposes the (gauge invariant) constraint
\begin{equation}\label{DataConstraint}
  \partial^i f_{0i} + [a^i,f_{0i}] = 0.
\end{equation}
In addition, there will be constraints coming from the chosen gauge condition. Let us first look at temporal gauge, then we move on to Lorenz gauge.

Regularity will be measured in the Sobolev spaces $H^s = (I-\Delta)^{-s/2} L^2(\R^3)$.

\begin{definition}\label{TemporalDataDefinition} A temporal-gauge $H^s$ data set consists of any given
$$
  (a_i,\dot a_i) \in H^s \times H^{s-1} \qquad (i \in \{1,2,3\})
$$
and
$$
  a_0=\dot a_0=0,
$$
such that \eqref{DataConstraint} is satisfied:
$$
  \partial^i \dot a_i + [a^i,\dot a_i] = 0.
$$
We say that the data have \emph{finite energy} if $f_{\alpha\beta} \in L^2$, with notation as in \eqref{f}.
\end{definition}

In temporal gauge, Segal \cite{Segal1979} established local well-posedness for $s \ge 3$. Eardley and Moncrief \cite{Eardley1982} improved this to $s \ge 2$ (for the more general Yang-Mills-Higgs equations, in fact), and moreover they were able to prove a global result \cite{Eardley1982b} by using the conservation of energy. Global well-posedness for finite-energy data (that is, $s=1$) was proved by Klainerman and Machedon \cite{Klainerman1995} and extended to Yang-Mills-Higgs by Keel \cite{Keel1997}. Tao \cite{Tao2003} proved local well-posedness for $s > 3/4$, for data with small norm.

As mentioned earlier, although the finite-energy result in \cite{Klainerman1995} is formulated in temporal gauge, Coulomb gauge is used to obtain the main estimates, via a local gauge change based on Uhlenbeck's lemma \cite{Uhlenbeck1982}. In the abelian case, on the other hand, a global Coulomb gauge can be used without problems, as in the works \cite{Klainerman1994, Cuccagna1999, Machedon2004} on the Maxwell-Klein-Gordon system, which is a special case of Yang-Mills-Higgs, corresponding to $G=U(1)$. Local and global regularity properties of the Yang-Mills and Maxwell-Klein-Gordon equations have also been studied in higher space dimensions, and in particular in $1+4$ dimensions, which is the energy-critical case; see \cite{Klainerman1999, Rodnianski2004, Sterbenz2007, Krieger2009}. Structure-preserving numerical schemes for Yang-Mills and Maxwell-Klein-Gordon have been found in \cite{Christiansen2006, Christiansen2011}.

Recently, Oh \cite{Oh2012, Oh2012b} introduced a new approach based on the Yang-Mills heat gauge, and in particular recovered the finite-energy global well-posedness result from \cite{Klainerman1995}.

Let us now turn our attention to the Lorenz gauge.

\begin{definition}\label{LorenzDataDefinition} A Lorenz-gauge $H^s$ data set consists of any given
$$
  (a_\alpha,\dot a_\alpha) \in H^s \times H^{s-1} \qquad (\alpha \in \{0,1,2,3\})
$$
satisfying the Lorenz gauge constraint
$$
  \dot a_0 = \partial^i a_i
$$
and the Yang-Mills constraint \eqref{DataConstraint}. We say that the data have \emph{finite energy} if $f_{\alpha\beta} \in L^2$, with notation as in \eqref{f}.
\end{definition}

\begin{remark}\label{EquivalenceRemark}
In particular, any temporal-gauge data set
$$
  a_0 = \dot a_0 = 0, \qquad (a_i,\dot a_i),
$$
trivially induces a Lorenz-gauge data set $(a',\dot a')$ given by
$$
  a_0' = 0, \qquad \dot a_0' = \partial^i a_i, \qquad (a_i',\dot a_i') = (a_i,\dot a_i).
$$
Moreover, the Lorenz-gauge evolution $(A',F')$ of $(a',\dot a')$ is formally gauge equivalent to the temporal-gauge evolution of $(a,\dot a)$. Indeed, if we can solve
$$
  \partial_t U = - A_0' U, \qquad U(0) = I,
$$
then defining $A$ by the gauge transformation (the inverse of \eqref{GaugeTransformation})
$$
  A_\alpha = U^{-1} A_\alpha' U + U^{-1} \partial_\alpha U,
$$
we have, calculating formally,
$$
  A_0=\partial_t A_0=0,
$$
so $(A,F=F[A])$ is a temporal-gauge solution. Moreover, since $U(0) = I$ and $\partial_t U(0) = -A_0'(0) U(0) = -a_0' = 0$, we have
$$
  A_i(0) = A_i'(0), \qquad \partial_t A_i(0) = \partial_t A_i'(0),
$$
so $(A,F)$ is indeed a temporal-gauge evolution of the original data.
\end{remark}

In Lorenz gauge, $(A,F)$ turns out to satisfy a system of nonlinear wave equations of the form
\begin{equation*}
\left\{
\begin{aligned}
  \square A &= \Pi(A,\partial A) + \Pi(A,F),
  \\
  \square F &= \Pi(A,\partial F) + \Pi(\partial A,\partial A) + \Pi(A,A,F) + \Pi(A,A,A,A),
\end{aligned}
\right.
\end{equation*}
where $\square = \partial^\alpha\partial_\alpha=-\partial_t^2 + \Delta$ is the D'Alembertian and each instance of $\Pi(\dots)$ denotes a multilinear form in the given arguments. The difficult terms here are the bilinear ones. But as we show in the next section, $\Pi(A,\partial A)$, $\Pi(A,\partial F)$ and $\Pi(\partial A,\partial A)$ are all null forms in Lorenz gauge. The term $\Pi(A,F)$, on the other hand, is not. Nevertheless, this term benefits from the regularising effect of the null structure in the wave equation for $F$. This structure implies that $F$ is more regular than $\partial A$, hence $\Pi(A,F)$ is more regular than a generic term $\Pi(A,\partial A)$.

Therefore, although one could expand $F$ using \eqref{Curvature} and reduce to a nonlinear wave equation for $A$ only, of the form
\begin{equation}\label{YMLbad}
  \square A = \Pi(A,\partial A) + \Pi(A,A,A),
\end{equation}
this is not a good idea in Lorenz gauge. In Coulomb gauge, on the other hand, the spatial part $(A_i)$ of $A$ will satisfy an equation of this form where \emph{all} the bilinear terms are null forms, as shown in \cite{Klainerman1995}.

We remark that by using a mixed-norm estimate for the homogeneous wave equation due to Pecher \cite{Pecher1984} (generalising the estimates of Strichartz \cite{Strichartz1977}), one can prove (essentially as in \cite{Ponce1993}) that generic equations of the form \eqref{YMLbad} are locally well posed for $H^s$ data for all $s > 1$, and this is sharp, in view of the counterexamples of Lindblad \cite{Lindblad1996}. If $\Pi(A,\partial A)$ is a null form, however, one can do better, and in particular $s=1$ is allowed, as proved by Klainerman and Machedon \cite{Klainerman1993, Klainerman1994}.

Using the null structure and bilinear space-time estimates, we prove the following local well-posedness result for finite-energy data. Observe that every $H^1$ data set has finite energy.

\begin{theorem}\label{MainThm} Fix $15/16 < s < 1$. Given any Lorenz-gauge $H^1$ data set $(a,\dot a)$ (which necessarily has finite energy), there exist a time $T > 0$, depending on
$$
  \norm{a}_{H^s} + \norm{\dot a}_{H^{s-1}} + \norm{f}_{L^2},
$$
and a solution $(A,F)$ of the Yang-Mills-Lorenz equations on $(-T,T) \times \R^3$ with the given initial data. The solution has the regularity
\begin{align*}
  A &\in C([-T,T]; H^s) \cap C^1([-T,T]; H^{s-1}),
  \\
  F &\in C([-T,T]; L^2),
\end{align*}
and is unique in a certain subspace. The solution depends continuously on the data in the norm $H^s \times H^{s-1} \times L^2$, and higher regularity persists. Moreover, the energy is conserved:
$$
  \mathcal E(t) = \int_{\R^3} \abs{F(t,x)}^2 \, dx = \text{const.}
$$
\end{theorem}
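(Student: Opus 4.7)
The plan is to set up a contraction argument for the coupled system of wave equations announced in the excerpt,
\begin{align*}
\square A &= \Pi(A,\partial A) + \Pi(A,F), \\
\square F &= \Pi(A,\partial F) + \Pi(\partial A,\partial A) + \Pi(A,A,F) + \Pi(A,A,A,A),
\end{align*}
using Bourgain-type wave-Sobolev spaces $X^{s,b}$ restricted to the slab $(-T,T)\times\R^3$. I would look for $A$ in (a restriction of) $X^{s,b}$ with $b>1/2$ close to $1/2$, which embeds into $C([-T,T];H^s)\cap C^1([-T,T];H^{s-1})$, and for $F$ in (a restriction of) $X^{0,b}$, which embeds into $C([-T,T];L^2)$. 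The usual Duhamel-plus-fixed-point scheme then reduces the theorem to a package of multilinear estimates on the right-hand sides.

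The decisive step is proving these multilinear estimates, in which the Lorenz-gauge null structure promised in the next section is crucial. The bilinear forms $\Pi(A,\partial A)$, $\Pi(A,\partial F)$ and $\Pi(\partial A,\partial A)$ are all null forms, so one can invoke Klainerman--Machedon-type null-form bounds of the shape $\|Q(u,v)\|_{X^{s_0-1,b-1}}\lesssim \|u\|_{X^{s_1,b}}\|v\|_{X^{s_2,b}}$ at the appropriate values of $(s_0,s_1,s_2)$, with a small power of $T$ to produce smallness. The non-null term $\Pi(A,F)$ is instead handled by exploiting that $F$ has better regularity ($L^2$) than $\partial A$ (only $H^{s-1}$), via a Sobolev-type product rule combined with the $X^{s,b}$ transfer principle; the cubic and quartic terms are then controlled by Strichartz and Hölder, which are comfortable at this regularity. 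The lower threshold $s>15/16$ should drop out precisely from the non-null bilinear estimate, since that is the single place in the analysis that cannot benefit from gauge cancellation.

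With the multilinear estimates in hand, a ball of small radius in $X^{s,b}\times X^{0,b}$ is invariant and contractive for $T=T(\|a\|_{H^s}+\|\dot a\|_{H^{s-1}}+\|f\|_{L^2})$ sufficiently small, yielding existence, uniqueness in the contraction space, and Lipschitz dependence on the data in $H^s\times H^{s-1}\times L^2$. Persistence of higher regularity follows because $T$ depends only on the low-regularity norms, so the same estimates iterate with smoother data on the same time interval. To see that the constructed $(A,F)$ is an honest Lorenz-gauge Yang--Mills solution, I would verify that the Lorenz defect $\partial^\alpha A_\alpha$ and the curvature deficit $F-F[A]$ each satisfy a homogeneous linear wave equation with vanishing Cauchy data, using the Lorenz constraint $\dot a_0=\partial^i a_i$ and the Yang--Mills constraint \eqref{DataConstraint}. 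Finally, conservation of $\mathcal E(t)=\int|F|^2\,dx$ follows formally by pairing \eqref{YM} (with $\beta$ contracted appropriately) against $F$; the finite-energy case is then obtained by approximating $(a,\dot a)$ by smoother data, for which the formal manipulation is rigorous, and passing to the limit using the continuous dependence just established.

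The main obstacle I anticipate is exactly the non-null product $\Pi(A,F)$. Unlike the null bilinears, it is not amenable to angular cancellation in frequency space; its treatment rests entirely on the asymmetric regularity $F\in L^2$ versus $\partial A\in H^{s-1}$, and hence on the extra regularity that the null structure in the $F$-equation confers on $F$. Pushing this gain through the $X^{s,b}$ machinery so that the iteration closes exactly at the threshold $s>15/16$ is the delicate book-keeping step, and it is this obstruction that makes it essential to retain $F$ as an independent unknown rather than substituting $F=dA+[A,A]$ and iterating the potential alone as in \eqref{YMLbad}.
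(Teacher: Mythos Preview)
Your overall strategy matches the paper's: iterate the coupled $(A,F)$ system in $X^{s,b}$-type spaces (the paper uses the Klein--Gordon splitting into $X^{s,b}_\pm$), reduce to multilinear estimates, exploit null structure for the derivative bilinears, handle $\Pi(A,F)$ by a product law, and recover the constraints and energy conservation by approximation with smooth data.

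However, you have the source of the threshold $s>15/16$ backwards. The non-null product $\Pi(A,F)$ is \emph{not} the bottleneck: the required estimate
\[
\|uv\|_{H^{s-1,b'-1}} \lesssim \|u\|_{H^{s,b}}\|v\|_{H^{0,b}}
\]
follows from the wave-Sobolev product law for every $0<s<1$ (with only the mild extra condition $s>b'-\tfrac12$). The restrictive constraint comes from the \emph{null-form} estimates themselves. With $b=\tfrac12+\varepsilon$, $b'=\tfrac12+2\varepsilon$, the null-form product conditions force
\[
s>\max\Bigl(\tfrac32-b,\ \tfrac58+\tfrac{b'}{2},\ \dots\Bigr)=\max\bigl(1-\varepsilon,\ \tfrac78+\varepsilon,\ \dots\bigr),
\]
and these are compatible with $s<1$ only when $\varepsilon\le 1/16$, which is exactly where $15/16$ appears. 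In other words, the threshold is an artifact of the null-form machinery (trading hyperbolic weight for angular gain via the angle estimate costs elliptic regularity on the low-frequency factor), not of the single non-null term. Your intuition that ``the place without cancellation must be the worst'' fails here because the null-form estimates involve $\partial A$ or $\partial F$ on one factor, whereas $\Pi(A,F)$ has no derivative at all.

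A smaller point: the Lorenz defect $u=\partial^\alpha A_\alpha$ and the curvature deficit $V=F-F[A]$ do not satisfy homogeneous linear wave equations. They satisfy a coupled system that is linear in $(u,V)$ but with rough $A$-dependent coefficients (see \eqref{uVwaves}); one cannot simply invoke linear uniqueness at the regularity of the iteration. The paper instead regularizes the data, uses persistence of higher regularity to conclude $(u,V)\equiv 0$ for smooth solutions, and then passes to the limit via continuous dependence.
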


In the next section we prove the key null identities. In section \ref{LGeqs} we write out the wave equations for $A$ and $F$ in Lorenz gauge, and use the null identities to show that all the bilinear terms with derivatives are null forms. The proof of local well-posedness is given in sections \ref{LWP} and \ref{Nonlinear}.

\section{Lorenz-gauge null form identities}\label{Null}

Recall the definition of Klainerman's null forms,
\begin{equation}\label{StandardNullforms}
\left\{
\begin{aligned}
  Q_0(u,v) &= \partial_\alpha u \partial^\alpha v = - \partial_t u \partial_t v + \partial^i u \partial_i v,
  \\
  Q_{\alpha\beta}(u,v) &= \partial_\alpha u \partial_\beta v - \partial_\beta u \partial_\alpha v.
\end{aligned}
\right.
\end{equation}
If $u,v$ are $\mathfrak g$-valued and the product is the matrix product, we can also define commutator versions:\begin{equation}\label{CommutatorNullforms}
\left\{
\begin{aligned}
  Q_0[u,v] &= [\partial_\alpha u, \partial^\alpha v] = Q_0(u,v) - Q_0(v,u),
  \\
  Q_{\alpha\beta}[u,v] &= [\partial_\alpha u, \partial_\beta v] - [\partial_\beta u, \partial_\alpha v] = Q_{\alpha\beta}(u,v) + Q_{\alpha\beta}(v,u).
\end{aligned}
\right.
\end{equation}
Note that $Q_0[\cdot,\cdot]$ is antisymmetric, whereas $Q_{\alpha\beta}[\cdot,\cdot]$ is symmetric. Observe also the identity
\begin{equation}\label{NullformTrick}
  [\partial_\alpha u, \partial_\beta u]
  = \frac12 \left( [\partial_\alpha u, \partial_\beta u] - [\partial_\beta u, \partial_\alpha u] \right)
  = \frac12 Q_{\alpha\beta}[u,u].
\end{equation}

We now prove the key identities that enable us to reveal null structures in Lorenz gauge. The identities involve certain linear combinations of the above null forms. To simplify the notation, we make the following definition.

\begin{definition}\label{NewNull} Let
$$
  \mathfrak Q[u,v] = - \frac12 \varepsilon^{ijk}\varepsilon_{klm} Q_{ij}\left[R^l u^m, v \right]
  - Q_{0i}\left[R^i u_0, v \right],
$$
where $\varepsilon_{ijk}$ is the antisymmetric symbol with $\varepsilon_{123} = 1$ and $R_i = \abs{\nabla}^{-1}\partial_i = (-\Delta)^{-1/2}\partial_i$ are the Riesz transforms.
\end{definition}

\begin{lemma}\label{NullLemma} Assume $A_\alpha,\phi \in \mathcal S$ with values in $\mathfrak g$. Then
\begin{align}
  \label{NullformA}
  A^\alpha \partial_\alpha \phi
  &=
  - \frac12 \varepsilon^{ijk}\varepsilon_{klm} Q_{ij}\left(\abs{\nabla}^{-1} R^l A^m, \phi \right)
  - Q_{0i}\left(\abs{\nabla}^{-1}R^i A_0, \phi\right)
  \\
  \notag
  &\qquad - \abs{\nabla}^{-1} R^i (\partial^\alpha A_\alpha) \partial_i \phi,
  \\
  \label{NullformB}
  (\partial_\alpha \phi) A^\alpha 
  &=
  \frac12 \varepsilon^{ijk}\varepsilon_{klm} Q_{ij}\left(\phi, \abs{\nabla}^{-1} R^l A^m \right)
  + Q_{0i}\left(\phi, \abs{\nabla}^{-1}R^i A_0\right)
  \\
  \notag
  &\qquad - (\partial_i \phi) \abs{\nabla}^{-1} R^i (\partial^\alpha A_\alpha),
  \\
  \label{NullformC}
  [A^\alpha, \partial_\alpha \phi]
  &=
  \mathfrak Q[\abs{\nabla}^{-1} A,\phi] - [ \abs{\nabla}^{-1}R^i (\partial^\alpha A_\alpha), \partial_i \phi],
\end{align}
where $\varepsilon_{ijk}$ is the antisymmetric symbol with $\varepsilon_{123} = 1$, $\abs{\nabla}^{-1} = (-\Delta)^{-1/2}$ and $R_i = \abs{\nabla}^{-1}\partial_i$.
\end{lemma}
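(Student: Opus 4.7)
The plan is to Hodge-decompose the spatial components of $A$ into divergence-free and curl-free parts. The divergence-free part will combine with $\partial_i\phi$ to produce $Q_{ij}$-type null forms, while the curl-free part contributes one piece of the Lorenz-gauge correction $-\abs{\nabla}^{-1}R^i(\partial^\alpha A_\alpha)\partial_i\phi$. The temporal contribution $A^0\partial_0\phi = -A_0\partial_t\phi$ I handle separately by inserting an artificial Riesz transform, manufacturing a $Q_{0i}$ null form plus a second correction. The two corrections must then combine to produce $-\abs{\nabla}^{-1}R^i(\partial^\alpha A_\alpha)\partial_i\phi$ exactly.

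Concretely, the 3-dimensional identity $\nabla\times(\nabla\times V) = \nabla(\nabla\cdot V) - \Delta V$ inverts to
\[
A^i = \varepsilon^{ijk}\varepsilon_{klm} R_j R^l A^m - \partial^i \abs{\nabla}^{-2}(\partial^j A_j),
\]
where $R_k = \abs{\nabla}^{-1}\partial_k$. Substituting this into $A^i\partial_i\phi$ and exploiting the antisymmetry of $\varepsilon^{ijk}$ in $i,j$ together with the relation $\partial_l R^l = -\abs{\nabla}$, the first summand yields exactly $-\tfrac12 \varepsilon^{ijk}\varepsilon_{klm} Q_{ij}(\abs{\nabla}^{-1}R^l A^m,\phi) - R^i\abs{\nabla}^{-1}(\partial^j A_j)\partial_i\phi$. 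For the $\alpha = 0$ piece I use $A_0 = -\partial_i(\abs{\nabla}^{-1}R^i A_0)$ (again from $\partial_i R^i = -\abs{\nabla}$), giving $-A_0\partial_t\phi = \partial_i(\abs{\nabla}^{-1}R^i A_0)\,\partial_t\phi$, and then invoke the trivial algebraic identity $\partial_i u\,\partial_t\phi = \partial_t u\,\partial_i\phi - Q_{0i}(u,\phi)$ with $u = \abs{\nabla}^{-1}R^i A_0$. Summing, the two correction pieces merge into $\abs{\nabla}^{-1}R^i(\partial_t A_0 - \partial^j A_j)\partial_i\phi = -\abs{\nabla}^{-1}R^i(\partial^\alpha A_\alpha)\partial_i\phi$, which establishes \eqref{NullformA}.

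Identity \eqref{NullformB} is obtained by running the same computation with the order of matrix products reversed throughout; at no step does the derivation rely on commutativity. Finally, \eqref{NullformC} is immediate: subtracting \eqref{NullformB} from \eqref{NullformA}, the commutator form $Q_{\alpha\beta}[u,v] = Q_{\alpha\beta}(u,v) + Q_{\alpha\beta}(v,u)$ assembles the two null-form pieces, the correction terms combine into $[\abs{\nabla}^{-1}R^i(\partial^\alpha A_\alpha), \partial_i\phi]$, and the resulting expression matches $\mathfrak{Q}[\abs{\nabla}^{-1}A,\phi]$ by Definition \ref{NewNull}.

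The only delicate step is the final sign bookkeeping for the two correction terms; everything else is short algebraic manipulation based on the Hodge identity, $\partial_l R^l = -\abs{\nabla}$, and the definition of $Q_{\alpha\beta}$. One must verify under the paper's conventions that $\partial^\alpha A_\alpha = -\partial_t A_0 + \partial^j A_j$, so that the two corrections combine exactly as claimed; I expect no other obstacles.
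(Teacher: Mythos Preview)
Your proof is correct and follows essentially the same route as the paper: Hodge-decompose the spatial part of $A$ to extract the $Q_{ij}$ null form from the divergence-free piece, rewrite $A_0$ via $A_0 = -\partial_i(\abs{\nabla}^{-1}R^i A_0)$ to manufacture the $Q_{0i}$ null form, and combine the remaining curl-free and temporal corrections into $-\abs{\nabla}^{-1}R^i(\partial^\alpha A_\alpha)\partial_i\phi$. The only organizational difference is that the paper bundles $-A_0\partial_t\phi$ with the curl-free spatial part before extracting $Q_{0i}$, whereas you treat the temporal and curl-free pieces separately and merge the corrections at the end; the algebra is otherwise identical.
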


\begin{proof}
We only prove \eqref{NullformA}, since an obvious modification of the argument gives \eqref{NullformB}, and then \eqref{NullformC} follows by subtracting the first two.

We split $A$ into its temporal part $A_0$ and its spatial part $\mathbf A = (A_1,A_2,A_3)$, and write the latter as the sum of its curl-free and divergence-free parts $\mathbf A^{\text{cf}}$ and $\mathbf A^{\text{df}}$:
$$
  \mathbf A = - (-\Delta)^{-1}\nabla(\nabla \cdot \mathbf A) + (-\Delta)^{-1} \nabla \times ( \nabla \times \mathbf A) =: \mathbf A^{\text{cf}} + \mathbf A^{\text{df}}.
$$
Now write
$$
  A^\alpha \partial_\alpha \phi
  =
  \left( - A_0 \partial_t \phi
  + \mathbf A^{\text{cf}} \cdot \nabla \phi \right)
  + \mathbf A^{\text{df}} \cdot \nabla \phi
$$
Since $(\mathbf A^{\text{df}})^i = \varepsilon^{ijk} \varepsilon_{klm} R_j  R^l A^m$ one has, as observed in \cite{Klainerman1994},
$$
  \mathbf A^{\text{df}} \cdot \nabla \phi
  =
  \varepsilon^{ijk} \varepsilon_{klm} (R_j  R^l A^m) \partial_i\phi
  =
  - \frac12 \varepsilon^{ijk}\varepsilon_{klm} Q_{ij}\left(\abs{\nabla}^{-1} R^l A_m, \phi\right).
$$
Next, writing
$$
  \mathbf A^{\text{cf}} = - (-\Delta)^{-1}\nabla(\nabla \cdot \mathbf A) = - (-\Delta)^{-1}\nabla(\partial_t A_0 + \partial^\alpha A_\alpha),
$$
we see that
\begin{align*}
  &- A_0 \partial_t \phi
  + \mathbf A^{\text{cf}} \cdot \nabla \phi
  =
  - A_0 \partial_t \phi
  - (-\Delta)^{-1}\partial^i(\partial_t A_0 + \partial^\alpha A_\alpha) \partial_i \phi
  \\
  &\qquad= - A_0 \partial_t \phi
  - (-\Delta)^{-1}\partial^i(\partial_t A_0) \partial_i \phi
  - (-\Delta)^{-1}\partial^i(\partial^\alpha A_\alpha) \partial_i \phi
  \\
  &\qquad=  (-\Delta)^{-1} (\partial^i \partial_i A_0) \partial_t \phi
  - (-\Delta)^{-1}\partial^i(\partial_t A_0) \partial_i \phi
  - \abs{\nabla}^{-1} R^i (\partial^\alpha A_\alpha) \partial_i \phi
  \\
  &\qquad=  \partial_i \left( \abs{\nabla}^{-1} R^i  A_0 \right) \partial_t \phi
  - \partial_t \left( \abs{\nabla}^{-1}R^i A_0 \right) \partial_i \phi
  - \abs{\nabla}^{-1} R^i (\partial^\alpha A_\alpha) \partial_i \phi
  \\
  &\qquad=
  - Q_{0i}\left(\abs{\nabla}^{-1}R^i A_0, \phi\right)
  - \abs{\nabla}^{-1} R^i (\partial^\alpha A_\alpha) \partial_i \phi,
\end{align*}
and this completes the proof of \eqref{NullformA}.
\end{proof}

We also need the identity
\begin{equation}\label{KeyIdentity2}
  [\partial_t A^\alpha, \partial_\alpha \phi]
  =
  [\partial^\alpha A_\alpha, \partial_t \phi] + Q_{0i}[A^i,\phi].
\end{equation}
Indeed, we calculate
\begin{align*}
  [\partial_t A^\alpha, \partial_\alpha \phi]
  &=
  [-\partial_t A_0, \partial_t \phi]
  + [\partial_t A^i, \partial_i \phi]
  \\
  &=
  [\partial^\alpha A_\alpha - \partial^i A_i, \partial_t \phi]
  + [\partial_t A^i, \partial_i \phi]
  \\
  &=
  [\partial^\alpha A_\alpha, \partial_t \phi] - [\partial_i A^i, \partial_t \phi]
  + [\partial_t A^i, \partial_i \phi]
  \\
  &=
  [\partial^\alpha A_\alpha, \partial_t \phi] + Q_{0i}[A^i,\phi].
\end{align*}

\section{The equations in Lorenz gauge}\label{LGeqs}

\subsection{Wave equation for $A$}

In view of \eqref{Curvature}, the Yang-Mills equations \eqref{YM} can be written as
$$
  \square A_\beta - \partial_\beta(\partial^\alpha A_\alpha) + [\partial^\alpha A_\alpha,A_\beta] + [A_\alpha,\partial^\alpha A_\beta] + [A^\alpha,F_{\alpha\beta}]
  = 0,
$$
where $\square = \partial^\alpha\partial_\alpha=-\partial_t^2 + \Delta$. Imposing the Lorenz gauge condition
$$
  \partial^\alpha A_\alpha = 0,
$$
this simplifies to
\begin{equation}\label{YML}
  \square A_\beta = - [A^\alpha,\partial_\alpha A_\beta] - [A^\alpha,F_{\alpha\beta}],
\end{equation}
but in view of Lemma \ref{NullLemma}, the first term on the right-hand side is a null form:
\begin{equation}\label{YMLnull}
  \square A_\beta = - [\mathfrak Q[\abs{\nabla}^{-1} A,A_\beta] - [A^\alpha,F_{\alpha\beta}].
\end{equation}

Expanding the last term in \eqref{YML}, one could also write
$$
  \square A_\beta = - 2[A^\alpha,\partial_\alpha A_\beta] + [A^\alpha,\partial_\beta A_\alpha] - [A^\alpha, [A_\alpha,A_\beta]],
$$
but this is not a good idea. The cubic term causes no problems, but the new bilinear term $[A^\alpha,\partial_\beta A_\alpha]$ is not a null form, as far as we know,\footnote{This is in contrast to the situation in Coulomb gauge, since there one can apply the projection $\mathbf P$ onto divergence-free fields on both sides of the wave equation for $A_j$, and use the fact that $\mathbf P[A^i,\partial_j A_i]$ is a null form; see \cite{Klainerman1994, Klainerman1995}.} and in fact it is worse than the term $[A^\alpha,F_{\alpha\beta}]$, since $F$ has better regularity than $\partial A$. The reason for this is that $F$ itself satisfies a nonlinear wave equation with null structure in the bilinear terms, as will soon transpire.

\subsection{Wave equation for $F$}

Regardless of the choice of gauge, $F$ satisfies
\begin{equation}\label{CurvatureEq}
  \square F_{\beta\gamma} + [A^\alpha,\partial_\alpha F_{\beta\gamma}] + \partial^\alpha[A_\alpha,F_{\beta\gamma}] + \left[A^\alpha,[A_\alpha,F_{\beta\gamma}]\right] + 2[F^{\alpha}_{{\;\;\,}\beta},F_{\gamma\alpha}] = 0.
\end{equation}
We recall the derivation below. The initial conditions are
$$
  (F,\partial_t F)(0) = (f,\dot f),
$$
where
\begin{equation}\label{Fdata}
\left\{
\begin{aligned}
  f_{ij} &= \partial_i a_j - \partial_j a_i + [a_i,a_j],
  \\
  f_{0i} &= \dot a_i - \partial_i a_0 + [a_0,a_i],
  \\
  \dot f_{ij} &= \partial_i \dot a_j - \partial_j \dot a_i + [\dot a_i,a_j] + [a_i,\dot a_j],
  \\
  \dot f_{0i} &= \partial^j f_{ji} + [a^\alpha,f_{\alpha i}],
\end{aligned}
\right.
\end{equation}
the expression for $\dot f_{0i}$ coming from \eqref{YM} with $\beta=i$.

Expanding the last term in \eqref{CurvatureEq} yields
\begin{equation*}
\left\{
\begin{aligned}
  \square F_{\beta\gamma} = &- [\partial^\alpha A_\alpha,F_{\beta\gamma}] 
  \\
  &- 2[A^\alpha,\partial_\alpha F_{\beta\gamma}]
  + 2[\partial_\gamma A^\alpha, \partial_\alpha A_\beta]
  - 2[\partial_\beta A^\alpha, \partial_\alpha A_\gamma]
  \\
  &+ 2[\partial^\alpha A_\beta , \partial_\alpha A_\gamma]
  + 2[\partial_\beta A^\alpha, \partial_\gamma A_\alpha]
  \\
  &- [A^\alpha,[A_\alpha,F_{\beta\gamma}]] + 2[F_{\alpha\beta},[A^\alpha,A_\gamma]] - 2[F_{\alpha\gamma},[A^\alpha,A_\beta]]
  \\
  &- 2[[A^\alpha,A_\beta],[A_\alpha,A_\gamma]].
\end{aligned}
\right.
\end{equation*}
Imposing the Lorenz gauge condition, the first term on the right-hand side disappears, the second term is a null form by Lemma \ref{NullLemma}, the third and fourth terms are null forms by either Lemma \ref{NullLemma} or the identity \eqref{KeyIdentity2}, the fifth term is identical to $2Q_0[A_\beta,A_\gamma]$, and the sixth term equals $Q_{\beta\gamma}[A^\alpha,A_\alpha]$ by the identity \eqref{NullformTrick}.

The conclusion is that, in Lorenz gauge,
\begin{equation}\label{CurvatureEqLorenz2}
\left\{
\begin{aligned}
  \square F_{ij} = &- 2\mathfrak Q[\abs{\nabla}^{-1} A,F_{ij}]
  + 2\mathfrak Q[\abs{\nabla}^{-1} \partial_j A, A_i]
  - 2\mathfrak Q[\abs{\nabla}^{-1} \partial_i A, A_j]
  \\
  &+ 2Q_0[A_i , A_j]
  + Q_{ij}[A^\alpha,A_\alpha]
  \\
  & - [A^\alpha,[A_\alpha,F_{ij}]] + 2[F_{\alpha i},[A^\alpha,A_j]] - 2[F_{\alpha j},[A^\alpha,A_i]]
  \\
  & - 2[[A^\alpha,A_i],[A_\alpha,A_j]].
\end{aligned}
\right.
\end{equation}
and
\begin{equation}\label{CurvatureEqLorenz3}
\left\{
\begin{aligned}
  \square F_{0i} = &- 2\mathfrak Q[\abs{\nabla}^{-1} A,F_{0i}]
  + 2\mathfrak Q[\abs{\nabla}^{-1} \partial_i A, A_0]
  - 2 Q_{0j}[A^j,A_i]
  \\
  &+ 2Q_0[A_0 , A_i]
  + Q_{0i}[A^\alpha,A_\alpha]
  \\
  & - [A^\alpha,[A_\alpha,F_{0i}]] + 2[F_{\alpha 0},[A^\alpha,A_i]] - 2[F_{\alpha i},[A^\alpha,A_0]]
  \\
  & - 2[[A^\alpha,A_0],[A_\alpha,A_i]].
\end{aligned}
\right.
\end{equation}
This completes the derivation of the null structure.

To end this section, we recall the derivation of \eqref{CurvatureEq}. Let $D_\alpha$ be the covariant derivative $D_\alpha = \partial_\alpha + [A_\alpha,\cdot]$, and note the commutation identity
$$
  D_\alpha D_\beta X - D_\beta D_\alpha X = [F_{\alpha\beta},X],
$$
which follows from the Jacobi identity
$$
  [X,[Y,Z]] + [Z,[X,Y]] + [Y,[Z,X]] = 0.
$$
In particular, $D^\alpha D_\beta F_{\gamma\alpha} = D_\beta D^\alpha F_{\gamma\alpha} + [F^{\alpha}_{{\;\;\,}\beta},F_{\gamma\alpha}]$ and $D^\alpha D_\gamma F_{\alpha\beta} = D_\gamma D^\alpha F_{\alpha\beta} + [F^{\alpha}_{{\;\;\,}\gamma},F_{\alpha\beta}]$. Therefore, applying $D^\alpha$ to both sides of the Bianchi identity
$$
  D_\alpha F_{\beta\gamma} + D_\beta F_{\gamma\alpha} + D_\gamma F_{\alpha\beta} = 0
$$
yields
$$
  D^\alpha D_\alpha F_{\beta\gamma} + D_\beta D^\alpha F_{\gamma\alpha} + [F^{\alpha}_{{\;\;\,}\beta},F_{\gamma\alpha}] + D_\gamma D^\alpha F_{\alpha\beta} + [F^{\alpha}_{{\;\;\,}\gamma},F_{\alpha\beta}] = 0.
$$
But by the Yang-Mills equation, $D^\alpha F_{\gamma\alpha} = 0$ and $D^\alpha F_{\alpha\beta} = 0$. By antisymmetry, $[F^{\alpha}_{{\;\;\,}\gamma},F_{\alpha\beta}] = [F_{\alpha\gamma},F^{\alpha}_{{\;\;\,}\beta}] = - [F^{\alpha}_{{\;\;\,}\beta}, F_{\alpha\gamma}] = [F^{\alpha}_{{\;\;\,}\beta}, F_{\gamma\alpha}]$. Thus,
$$
  D^\alpha D_\alpha F_{\beta\gamma} + 2[F^{\alpha}_{{\;\;\,}\beta},F_{\gamma\alpha}] = 0.
$$
Finally, in view of the identity
$$
  D^\alpha D_\alpha X = \square X + [A^\alpha,\partial_\alpha X] + \partial^\alpha[A_\alpha,X] + [A^\alpha,[A_\alpha,X]],
$$
one obtains \eqref{CurvatureEq}.

\section{Local well-posedness}\label{LWP}

Here we prove Theorem \ref{MainThm}.

By iteration, we solve simultaneously the nonlinear wave equations for $A$ and $F$, written in terms of null forms as in \eqref{YMLnull}, \eqref{CurvatureEqLorenz2} and \eqref{CurvatureEqLorenz3}:
\begin{equation}\label{AFwaves}
\left\{
\begin{aligned}
  \square A_\beta &= \mathfrak M_\beta(A,\partial_t A,F,\partial_t F),
  \\
  \square F_{\beta\gamma} &= \mathfrak N_{\beta\gamma}(A,\partial_t A,F,\partial_t F),
\end{aligned}
\right.
\end{equation}
where
\begin{gather*}
  \mathfrak M_\beta(A,\partial_t A,F,\partial_t F) = - \mathfrak Q[\abs{\nabla}^{-1} A,A_\beta] - [A^\alpha,F_{\alpha\beta}],
  \\
  \left\{
  \begin{aligned}
  \mathfrak N_{ij}(A,\partial_t A,F,\partial_t F)
  = &- 2\mathfrak Q[\abs{\nabla}^{-1} A,F_{ij}]
  + 2\mathfrak Q[\abs{\nabla}^{-1} \partial_j A, A_i]
  \\
  &- 2\mathfrak Q[\abs{\nabla}^{-1} \partial_i A, A_j] + 2Q_0[A_i , A_j]
  + Q_{ij}[A^\alpha,A_\alpha]
  \\
  & - [A^\alpha,[A_\alpha,F_{ij}]] + 2[F_{\alpha i},[A^\alpha,A_j]] - 2[F_{\alpha j},[A^\alpha,A_i]]
  \\
  & - 2[[A^\alpha,A_i],[A_\alpha,A_j]],
  \end{aligned}
  \right.
  \\
  \left\{
  \begin{aligned}
  \mathfrak N_{0i}(A,\partial_t A,F,\partial_t F)
  = &- 2\mathfrak Q[\abs{\nabla}^{-1} A,F_{0i}]
  + 2\mathfrak Q[\abs{\nabla}^{-1} \partial_i A, A_0]
  \\
  &- 2 Q_{0j}[A^j,A_i] + 2Q_0[A_0 , A_i]
  + Q_{0i}[A^\alpha,A_\alpha]
  \\
  & - [A^\alpha,[A_\alpha,F_{0i}]] + 2[F_{\alpha 0},[A^\alpha,A_i]] - 2[F_{\alpha i},[A^\alpha,A_0]]
  \\
  & - 2[[A^\alpha,A_0],[A_\alpha,A_i]].
  \end{aligned}
  \right.
\end{gather*}
The initial conditions are
\begin{equation}\label{AFdata}
  (A,\partial_t A)(0) = (a,\dot a),
  \qquad
  (F,\partial_t F)(0) = (f,\dot f).
\end{equation}

We shall prove the following.

\begin{theorem}\label{LWPThm} (Local well-posedness.)
Let $15/16 < s < 1$. Given any data
$$
  (a,\dot a) \in H^s \times H^{s-1},
  \qquad
  (f,\dot f) \in L^2 \times H^{-1},
$$
there exists a $T > 0$, depending continuously on the data norm
$$
  \norm{a}_{H^s} + \norm{\dot a}_{H^{s-1}} + \norm{f}_{L^2} + \bignorm{\dot f}_{H^{-1}},
$$
and there exists
\begin{equation}\label{SolutionSpace}
\left\{
\begin{aligned}
  A &\in C([-T,T]; H^s) \cap C^1([-T,T]; H^{s-1}),
  \\
  F &\in C([-T,T]; L^2) \cap C^1([-T,T]; H^{-1}),
\end{aligned}
\right.
\end{equation}
solving \eqref{AFwaves} on $S_T = (-T,T) \times \R^3$ in the sense of distributions, and satisfying the initial condition \eqref{AFdata}.

The solution has the regularity, for some $b > 1/2$,
$$
  \left( A \pm \frac{1}{i\angles{\nabla}} \partial_t A \right) \in X_\pm^{s,b}(S_T),
  \qquad
  \left( F \pm \frac{1}{i\angles{\nabla}} \partial_t F \right) \in X_\pm^{0,b}(S_T),
$$
and it is the unique solution with this property. (See Definition \ref{Xdef} below for the definition of the spaces used here.)

The solution depends continuously on the data. Moreover, higher regularity persists, in the sense that if, for some $k \in \mathbb N$, we have that
$$
  \nabla^\alpha(a,\dot a) \in H^s \times H^{s-1}, \qquad \nabla^{\alpha}(f,\dot f) \in L^2 \times H^{-1},
$$
for all multi-indexes $\alpha \in \mathbb N_0^3$ with $\abs{\alpha} \le k$, then $\partial^\alpha(A,F)$ belongs to \eqref{SolutionSpace} for all $\alpha \in \mathbb N_0^{1+3}$ with $\abs{\alpha} \le k$. In particular, if the data are smooth and compactly supported, then the solution is smooth.
\end{theorem}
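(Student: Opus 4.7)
The plan is to solve the coupled system \eqref{AFwaves} by Picard iteration in Bourgain-type spaces adapted to the two characteristic cones. For $u \in \{A_\beta, F_{\beta\gamma}\}$, set $u_{\pm} = \half\bigl( u \mp (i\angles{\nabla})^{-1}\partial_t u\bigr)$, so that $u = u_+ + u_-$ and each half-wave component satisfies $(-i\partial_t \pm \angles{\nabla}) u_{\pm} = \mp \half \angles{\nabla}^{-1} N$, where $N$ is the nonlinearity $\mathfrak M_\beta$ or $\mathfrak N_{\beta\gamma}$ from \eqref{AFwaves}. Working with the Duhamel formulation in the product space
$$\mathcal X_T^{s,b} = \prod_{\beta,\pm} X_\pm^{s,b}(S_T) \times \prod_{\beta\gamma,\pm} X_\pm^{0,b}(S_T),$$
for some $b > 1/2$ close to $1/2$, standard linear $X^{s,b}$ energy estimates show that the free-evolution part is bounded in $\mathcal X_T^{s,b}$ by the data norm uniformly in $T \le 1$.

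The heart of the proof is then a collection of multilinear estimates of the form
$$ \bignorm{\mathfrak M_\beta}_{X_\pm^{s-1,b-1+\epsilon}(S_T)} + \bignorm{\mathfrak N_{\beta\gamma}}_{X_\pm^{-1,b-1+\epsilon}(S_T)} \lesssim T^{\theta}\, P\bigl(\norm{(A,F)}_{\mathcal X_T^{s,b}}\bigr), $$
with some $\theta > 0$ and $P$ a polynomial, together with Lipschitz analogues for differences. The terms split into three classes. The null-form bilinears $\mathfrak Q[\abs{\nabla}^{-1}A, A]$, $\mathfrak Q[\abs{\nabla}^{-1}A, F]$, $Q_0[A,A]$, $Q_{\alpha\beta}[A,A]$, $Q_{0j}[A,A]$ appearing on the right-hand sides of \eqref{YMLnull}, \eqref{CurvatureEqLorenz2} and \eqref{CurvatureEqLorenz3} are controlled by Klainerman-Machedon-type bilinear estimates, which convert the null cancellation at parallel interacting frequencies into a modulation gain. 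The non-null bilinear $[A^\alpha, F_{\alpha\beta}]$ in $\mathfrak M$ is handled by a plain $X^{s,b}$ product estimate: since $F$ lives at $L^2$ regularity while a generic $\partial A$ would only be $H^{s-1}$ with $s < 1$, this term is actually \emph{better-behaved} than a generic $\Pi(A,\partial A)$ would be. Finally, the cubic and quartic commutators $[A,[A,F]]$, $[F,[A,A]]$, $[[A,A],[A,A]]$ contain no derivatives and are handled by Sobolev embeddings together with Strichartz-type bounds for $X^{s,b}$.

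The main obstacle is proving these null-form bilinear estimates uniformly for $s > 15/16$. The threshold $15/16$ originates from the hardest case, the term $\mathfrak Q[\abs{\nabla}^{-1} A, F]$ in $\mathfrak N_{\beta\gamma}$, where the two factors sit at different regularities ($s$ and $0$) and the target is $H^{-1}$, so one is close to the scaling-critical space. The argument is a dyadic decomposition in frequency and modulation, treating high-high, high-low and low-high interactions separately; in the worst (high-high to low-output) case one must combine the null-form angular gain, controlled by the standard estimate $\vangle(\xi_1,\pm\xi_2) \lesssim \bigl(\max_j \sigma_j / \min_j\abs{\xi_j}\bigr)^{1/2}$, with a sharp modulation $L^2_{t,x}$ estimate of Klainerman-Tataru type. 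The $\abs{\nabla}^{-1}$ attached to one factor of $\mathfrak Q$ provides the crucial smoothing that makes this estimate feasible at $s < 1$, and a careful tally of powers of the output frequency versus the interior frequencies produces exactly the condition $s > 15/16$.

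Once the multilinear estimates are in hand, contraction on a small ball in $\mathcal X_T^{s,b}$ for $T$ sufficiently small (depending continuously on the data norm through $P$) yields a unique fixed point $(A_\pm, F_\pm)$; reconstructing $A = A_+ + A_-$, $F = F_+ + F_-$ and invoking the embedding $X_\pm^{\sigma, b}(S_T) \hookrightarrow C([-T,T]; H^\sigma)$ for $b > 1/2$ gives the regularity claimed in \eqref{SolutionSpace}. Continuous dependence follows directly from the contraction bounds applied to differences. Persistence of higher regularity is obtained by applying spatial derivatives $\nabla^\alpha$ to \eqref{AFwaves} and rerunning the contraction at the higher regularity level, which is strictly easier since the bilinear estimates improve as the regularity of both factors is raised; time derivatives of the required order are then recovered from the equation itself.
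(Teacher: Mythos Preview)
Your overall strategy is correct and coincides with the paper's: split into half-waves, iterate in $X_\pm^{s,b}$ spaces, and close the contraction via null-form bilinear estimates together with a plain product estimate for $[A^\alpha,F_{\alpha\beta}]$ and Sobolev/Strichartz bounds for the cubic and quartic terms. The paper carries this out by packaging the null-form estimates as an angle bound (Lemma~\ref{AngleEstimate}) composed with an $H^{s,b}$ product law (Theorem~\ref{Atlas}), rather than a direct dyadic Klainerman--Tataru argument, but the two routes are close cousins.

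However, your diagnosis of the $15/16$ threshold is wrong, and this matters because it is the one place where a genuine computation has to be done. The threshold does \emph{not} come from $\mathfrak Q[\abs{\nabla}^{-1}A,F]$ alone. With $b=\tfrac12+\varepsilon$ and $b'=\tfrac12+2\varepsilon$, the three null-form estimates that must hold simultaneously are
\[
\underbrace{\norm{Q[\abs{\nabla}^{-1}A,A]}_{H^{s-1,b'-1}}}_{\text{needs }s>1-\varepsilon},\qquad
\underbrace{\norm{Q[\abs{\nabla}^{-1}A,F]}_{H^{-1,b'-1}}}_{\text{needs }s>1-\varepsilon},\qquad
\underbrace{\norm{Q[A,A]}_{H^{-1,b'-1}}}_{\text{needs }s>\tfrac78+\varepsilon}.
\]
The last family includes the terms $\mathfrak Q[\abs{\nabla}^{-1}\partial_j A,A_i]$ in $\mathfrak N_{\beta\gamma}$ (since $\abs{\nabla}^{-1}\partial_j=R_j$ is bounded, so there is \emph{no} net smoothing there), together with the explicit $Q_0[A,A]$, $Q_{ij}[A,A]$ and $Q_{0j}[A,A]$. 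The first two constraints push $\varepsilon$ up, the third pushes it down; they balance at $\varepsilon=1/16$, giving $s>15/16$. In particular your sentence ``the $\abs{\nabla}^{-1}$ attached to one factor of $\mathfrak Q$ provides the crucial smoothing'' is misleading for exactly the terms that matter: in $\mathfrak Q[\abs{\nabla}^{-1}\partial A,A]$ the $\abs{\nabla}^{-1}$ is eaten by the derivative, and this term (not the $A$--$F$ interaction) supplies the constraint $s>7/8+\varepsilon$ that, combined with $s>1-\varepsilon$, forces $s>15/16$.
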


\begin{remark}
In this theorem, we do not assume any compatibility conditions on the data, hence $F=F[A]$ and the Lorenz gauge condition $\partial^\alpha A_\alpha=0$ will not necessarily hold. They will hold, however, if we assume the constraints \eqref{Fdata}, \eqref{DataConstraint} and $\dot a_0 = \partial^i a_i$. Indeed, setting
$$
  u = \partial^\alpha A_\alpha, \qquad V = F-F[A],
$$
then
$$
  u(0) = \partial_t u(0) = 0,
  \qquad
  V(0) = \partial_t V(0) =0,
$$
and $(u,V)$ satisfies a system of wave equations of the form
\begin{equation}\label{uVwaves}
\left\{
\begin{aligned}
  \square u &= \Pi(A,\partial u) + \Pi(\partial A,V) + \Pi(A,\partial V)
  \\
  &\quad + \Pi(\partial \nabla A,\abs{\nabla}^{-1}Ru) + \Pi(\nabla A,\partial \abs{\nabla}^{-1}Ru)
  \\
  &\quad  + \Pi(A,A,u) + \Pi(A,A,V)  + \Pi(A,\partial A,\abs{\nabla}^{-1}Ru),
  \\
  \square V &= \Pi(\partial A,V) + \Pi(A,\partial V) + \Pi(\partial G, \abs{\nabla}^{-1}Ru)
  \\
  &\quad + \Pi(\partial \nabla A,\abs{\nabla}^{-1}Ru) + \Pi(\nabla A,\partial \abs{\nabla}^{-1}Ru)
  \\
  &\quad + \Pi(A,A,V) + \Pi(A,\partial A,\abs{\nabla}^{-1}Ru),
\end{aligned}
\right.
\end{equation}
where $R=(R_1,R_2,R_3)$, $R_i=\abs{\nabla}^{-1}\partial_i$, and each instance of $\Pi(\dots)$ denotes a multilinear form in the given arguments. By regularisation of the initial data (as in \cite[Proposition 1.2]{Klainerman1995}), persistence of higher regularity, and continuous dependence on the data, we may assume smoothness. Therefore, the unique solution is $(u,V) = (0,0)$. Thus $(A,F=F[A])$ is a solution of the actual Yang-Mills equations \eqref{YM}, and the Lorenz gauge condition holds. The regularisation argument shows that our solutions are limits of smooth solutions, and since conservation of energy holds for the latter, it also holds for our solutions.
\end{remark}

Let us now turn to the proof of Theorem \ref{LWPThm}.

It is convenient to recast the system in first-order form, since this will allow us to treat the null forms in a unified way (see section \ref{NullformReductions}). To avoid certain singularities at low frequency, we first rewrite our system so that we have the Klein-Gordon operator $\square-1$ on the left-hand side:
$$
\left\{
\begin{aligned}
  (\square-1) A_\beta &= -A_\beta + \mathfrak M_\beta(A,\partial_t A,F,\partial_t F),
  \\
  (\square-1) F_{\beta\gamma} &= -F_{\beta\gamma} + \mathfrak N_{\beta\gamma}(A,\partial_t A,F,\partial_t F).
\end{aligned}
\right.
$$
Now apply the change of variables $(A,\partial_t A,F,\partial_t F) \to (A_+,A_-,F_+,F_-)$ given by
$$
  A_\pm = \frac12 \left( A \pm \frac{1}{i\angles{\nabla}} \partial_t A \right),
  \qquad
  F_\pm = \frac12 \left( F \pm \frac{1}{i\angles{\nabla}} \partial_t F \right).
$$
Equivalently,
\begin{equation}\label{Substitution}
  (A,\partial_t A,F,\partial_t F)
  =
  \bigl( A_+ + A_-, i\angles{\nabla} (A_+ - A_-),
  F_+ + F_-, i\angles{\nabla} (F_+ - F_-) \bigr).
\end{equation}
Then our system transforms to
\begin{equation}\label{AFwavesSplit}
\left\{
\begin{aligned}
  (i\partial_t + \angles{\nabla}) A_+ &= - \frac{1}{2 \angles{\nabla}} \mathfrak M'(A_+,A_-,F_+,F_-),
  \\
  (i\partial_t - \angles{\nabla}) A_- &= + \frac{1}{2 \angles{\nabla}} \mathfrak M'(A_+,A_-,F_+,F_-),
  \\
  (i\partial_t + \angles{\nabla}) F_+ &= - \frac{1}{2 \angles{\nabla}} \mathfrak N'(A_+,A_-,F_+,F_-),
  \\
  (i\partial_t - \angles{\nabla}) F_- &= + \frac{1}{2 \angles{\nabla}} \mathfrak N'(A_+,A_-,F_+,F_-),
\end{aligned}
\right.
\end{equation}
where
\begin{align*}
  \mathfrak M'(A_+,A_-,F_+,F_-) &= - (A_+ + A_-) + \mathfrak M(A,\partial_t A,F,\partial_t F),
  \\
  \mathfrak N'(A_+,A_-,F_+,F_-) &= - (F_+ + F_-) + \mathfrak N(A,\partial_t A,F,\partial_t F),
\end{align*}
and in the right-hand side it is understood that we use the substitution \eqref{Substitution} on the arguments of $\mathfrak M$ and $\mathfrak N$.

The transformed initial data are
\begin{equation}\label{AFdataSplit}
\left\{
\begin{aligned}
  A_\pm(0) &= a_\pm := \frac12 \left( a \pm \frac{1}{i\angles{\nabla}} \dot a \right) \in H^s,
  \\
  F_\pm(0) &= f_\pm := \frac12 \left( f \pm \frac{1}{i\angles{\nabla}} \dot f \right) \in L^2.
\end{aligned}
\right.
\end{equation}

We solve the transformed system by iterating in $X^{s,b}$-spaces associated to the dispersive operators $i\partial_t \pm \angles{\nabla}$. Spaces of this type have become an indispensable tool in the study of nonlinear dispersive PDEs since the seminal work of Bourgain \cite{Bourgain1993}. For an exposition of the theory, see \cite[Section 2.6]{Tao2006}.

\begin{definition}\label{Xdef} For $(s,b) \in \R^2$, define $X^{s,b}_\pm$ to be the completion of $\mathcal S(\R^{1+3})$ with respect to the norm
$$
  \norm{u}_{X^{s,b}_\pm} = \norm{\angles{\xi}^s \bigangles{-\tau \pm \angles{\xi}}^b \widetilde u(\tau,\xi)}_{L^2_{\tau,\xi}},
$$
where $\widetilde u(\tau,\xi) = \mathcal F_{t,x} u(\tau,\xi)$ is the space-time Fourier transform of $u(t,x)$. The restriction to a time slab $S_T = (-T,T) \times \R^3$, denoted $X^{s,b}_\pm(S_T)$, is defined as the quotient space $X^{s,b}_\pm / \mathcal M$, where $\mathcal M$ is the closed subspace consisting of $u \in X^{s,b}_\pm$ such that $u = 0$ on $S_T$.
\end{definition}

We recall some facts about $X^{s,b}$-spaces (see, e.g., \cite[Section 2.6]{Tao2006}).

\begin{lemma}\label{EmbeddingLemma} Let $s \in \R$, $b > 1/2$ and $T > 0$. Then $X^{s,b}_\pm(S_T) \hookrightarrow C([-T,T];H^s)$.
\end{lemma}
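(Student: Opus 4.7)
The plan is to reduce to the statement on the whole space $\mathbb R^{1+3}$ and then prove the embedding by Fourier analysis. First I would note that by definition of $X^{s,b}_\pm(S_T)$ as a quotient, any $u \in X^{s,b}_\pm(S_T)$ has an extension $\tilde u \in X^{s,b}_\pm(\mathbb R^{1+3})$ with norm arbitrarily close to $\|u\|_{X^{s,b}_\pm(S_T)}$, and $\tilde u$ agrees with $u$ on $S_T$. So it suffices to show $X^{s,b}_\pm(\mathbb R^{1+3}) \hookrightarrow C(\mathbb R; H^s)$ and then restrict.

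The core computation uses the substitution $\sigma = \tau \mp \angles{\xi}$, which converts the weight $\bigangles{-\tau \pm \angles{\xi}}^b$ into $\bigangles{\sigma}^b$ and, via Fourier inversion in $(\tau,\xi)$, gives the representation
\begin{equation*}
  u(t,x) = c \int_{\mathbb R} e^{it\sigma} \bigl( e^{\pm it\angles{\nabla}} w_\sigma \bigr)(x) \, d\sigma,
  \qquad
  \widehat{w_\sigma}(\xi) := \tilde u(\pm\angles{\xi}+\sigma,\xi),
\end{equation*}
where $e^{\pm it\angles{\nabla}}$ is the unitary Klein--Gordon propagator on $H^s$. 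Taking $H^s$-norms inside the integral and using unitarity yields $\|u(t)\|_{H^s} \le \int \|w_\sigma\|_{H^s} \, d\sigma$. Splitting $\|w_\sigma\|_{H^s} = \bigangles{\sigma}^{-b} \cdot \bigangles{\sigma}^b \|w_\sigma\|_{H^s}$ and applying Cauchy--Schwarz in $\sigma$ produces the bound
\begin{equation*}
  \|u(t)\|_{H^s}
  \le
  \biggl(\int \bigangles{\sigma}^{-2b} d\sigma\biggr)^{1/2}
  \biggl(\int \bigangles{\sigma}^{2b} \|w_\sigma\|_{H^s}^2 \, d\sigma\biggr)^{1/2}
  \lesssim_b \|u\|_{X^{s,b}_\pm},
\end{equation*}
where the first factor is finite precisely because $b > 1/2$ and the second factor, after unfolding the substitution, is exactly $\|u\|_{X^{s,b}_\pm}$ (by Fubini and the evenness of $\angles{\cdot}$). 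This supplies the pointwise-in-$t$ bound $\sup_t \|u(t)\|_{H^s} \lesssim \|u\|_{X^{s,b}_\pm}$.

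For continuity, I would apply the same representation to $u(t+h) - u(t)$: this gives
\begin{equation*}
  u(t+h) - u(t) = c \int_{\mathbb R} \bigl[e^{i(t+h)\sigma} e^{\pm i(t+h)\angles{\nabla}} - e^{it\sigma} e^{\pm it\angles{\nabla}}\bigr] w_\sigma \, d\sigma,
\end{equation*}
and the integrand tends to zero in $H^s$ for a.e.\ $\sigma$ as $h \to 0$ by strong continuity of the unitary group, while being dominated by $2\|w_\sigma\|_{H^s}$, whose integral is finite by the Cauchy--Schwarz step above. Dominated convergence then gives $u \in C(\mathbb R; H^s)$, and restricting to $[-T,T]$ finishes the proof. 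There is no real obstacle here; the only subtlety is the initial reduction via extension, which is purely formal from the quotient-space definition of $X^{s,b}_\pm(S_T)$.
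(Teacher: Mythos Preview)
Your argument is correct and is in fact the standard proof of this embedding. The paper itself does not supply a proof of this lemma; it simply records it as a known fact about $X^{s,b}$-spaces, with a reference to \cite[Section 2.6]{Tao2006}.
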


\begin{lemma}\label{IVPlemma}
Let $s \in \R$, $1/2 < b \le b' < 1$ and $0 < T < 1$. Then for any $f \in H^s$ and $F \in X^{s,b'-1}_\pm(S_T)$, the Cauchy problem
$$
  (i\partial_t \pm \angles{\nabla}) u = G \quad \text{on $S_T$}, \qquad u(0) = g,
$$
has a unique solution
$$
  u \in X^{s,b}_\pm(S_T).
$$
Moreover,
$$
  \norm{u}_{X^{s,b}_\pm(S_T)} \le C \left( \norm{g}_{H^s} + T^{b'-b} \norm{G}_{X^{s,b'-1}_\pm(S_T)} \right),
$$
where $C$ depends only on $b$ and $b'$.
\end{lemma}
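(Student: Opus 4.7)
The plan is to reduce to the extension problem on $\R^{1+3}$ via Duhamel's formula and then invoke two standard facts about $X^{s,b}$ spaces. Since $X^{s,b}_\pm(S_T)$ is defined as a quotient, it suffices to construct, for any extension $\tilde G \in X^{s,b'-1}_\pm$ of $G$, an extension $\tilde u \in X^{s,b}_\pm$ of the solution with the claimed bound in terms of $\norm{\tilde G}_{X^{s,b'-1}_\pm}$; infimising over $\tilde G$ then yields the result.

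With a smooth cutoff $\chi$ equal to $1$ on $[-1,1]$ and supported in $[-2,2]$, I would set
$$
  \tilde u(t) = \chi(t)\, e^{\mp it\angles{\nabla}} g \;-\; i\,\chi(t)\int_0^t e^{\mp i(t-s)\angles{\nabla}}\,\chi(s/T)\,\tilde G(s)\d s,
$$
which agrees with the Duhamel solution on $S_T$ since $\chi \equiv 1$ there. For the homogeneous piece, a direct space-time Fourier computation gives $\widetilde{\chi\, e^{\mp it\angles{\nabla}} g}(\tau,\xi) = \widehat{\chi}(\tau\pm\angles{\xi})\widehat{g}(\xi)$, from which $\norm{\chi\, e^{\mp it\angles{\nabla}} g}_{X^{s,b}_\pm} \lesssim \norm{g}_{H^s}$ follows immediately because $\widehat\chi$ is Schwartz.

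For the inhomogeneous piece I would combine two standard lemmas. The truncated Duhamel estimate
$$
  \Bignorm{\chi(t)\int_0^t e^{\mp i(t-s)\angles{\nabla}} H(s)\d s}_{X^{s,b}_\pm} \lesssim \norm{H}_{X^{s,b-1}_\pm} \qquad (1/2 < b < 1)
$$
is proved by splitting the Duhamel kernel on the Fourier side into a principal term (which gives a modulation-free bound) and a Schwartz remainder, the constraint $b > 1/2$ being precisely what makes the weight $\angles{\tau\pm\angles{\xi}}^{-2(1-b)}$ integrable in $\tau$. The time-localisation bound
$$
  \norm{\chi(\cdot/T)\, v}_{X^{s,\beta}_\pm} \lesssim T^{\beta'-\beta}\, \norm{v}_{X^{s,\beta'}_\pm}, \qquad -1/2 < \beta \le \beta' < 1/2,
$$
applied with $\beta = b-1$, $\beta' = b'-1$ (a range exactly encoded by $1/2 < b \le b' < 1$), then contributes the factor $T^{b'-b}$ when the first lemma is applied to $H = \chi(\cdot/T)\tilde G$.

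Uniqueness is immediate from Lemma \ref{EmbeddingLemma}: two $X^{s,b}_\pm(S_T)$-solutions with the same data have continuous-in-time $H^s$ representatives whose difference is a continuous $H^s$-solution of the homogeneous problem with zero initial data, and hence vanishes by classical semigroup theory for $i\partial_t\pm\angles{\nabla}$. The only genuine technicality is the careful interaction between the cutoffs and the quotient definition of $X^{s,b}_\pm(S_T)$; once that bookkeeping is in place, the rest of the proof reduces to the two Fourier-side estimates above, both of which are standard (see \cite[Section 2.6]{Tao2006}).
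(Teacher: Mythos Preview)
The paper does not prove this lemma: it is stated as a standard fact about $X^{s,b}$-spaces, with a reference to \cite[Section 2.6]{Tao2006}. Your outline is exactly the standard argument one finds there --- cutoff Duhamel, homogeneous estimate, truncated Duhamel estimate, and the time-localisation bound $\norm{\chi(\cdot/T)v}_{X^{s,\beta}_\pm} \lesssim T^{\beta'-\beta}\norm{v}_{X^{s,\beta'}_\pm}$ for $-1/2<\beta\le\beta'<1/2$ --- so there is nothing to compare.

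One slip worth fixing: your propagator has the wrong sign. For $(i\partial_t \pm \angles{\nabla})u=0$ the free evolution is $e^{\pm it\angles{\nabla}}g$, not $e^{\mp it\angles{\nabla}}g$ (compare the Remark immediately following the lemma in the paper). With your sign, $\widetilde{\chi\,e^{\mp it\angles{\nabla}}g}(\tau,\xi)=\widehat\chi(\tau\pm\angles{\xi})$ is concentrated near $\tau=\mp\angles{\xi}$, while the $X^{s,b}_\pm$-weight $\angles{-\tau\pm\angles{\xi}}^b$ is large there (of size $\angles{\xi}^b$), so the homogeneous bound would fail. With the correct sign the Fourier transform is $\widehat\chi(\tau\mp\angles{\xi})$, the weight becomes $\angles{\tau\mp\angles{\xi}}^b$ evaluated at the same argument, and the estimate is immediate from $\widehat\chi\in\mathcal S$. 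The rest of your argument is unaffected.
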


\begin{remark} For sufficiently regular $G$ (say $G \in C([-T,T];H^s)$), the solution in the last lemma is
$u(t) = e^{\pm i t \angles{\nabla}} g - i \int_0^t e^{\pm i (t-t') \angles{\nabla}} G(t') \, dt'$ for $-T \le t \le T$.
\end{remark}

Using Lemma \ref{IVPlemma} and a standard iteration argument (which we omit), local well-posedness can be deduced from the following nonlinear estimates.

\begin{lemma}\label{NonlinearEstimates}
Let $0 < T < 1$, $b = 1/2 + \varepsilon$, $b'=1/2+2\varepsilon$ and $1-\varepsilon < s < 1$, where $0 < \varepsilon \le 1/16$. Then we have the estimates
\begin{align*}
  \norm{\mathfrak M'\left(A_+,A_-,F_+,F_-\right)}_{X^{s-1,b'-1}_\pm(S_T)}
  &\lesssim N (1+N^3),
  \\
  \norm{\mathfrak N'\left(A_+,A_-,F_+,F_-\right)}_{X^{-1,b'-1}_\pm(S_T)}
  &\lesssim N (1+N^3),
\end{align*}
where
$$
  N = \norm{A_+}_{X^{s,b}_+(S_T)} + \norm{A_-}_{X^{s,b}_-(S_T)}
  + \norm{F_+}_{X^{0,b}_+(S_T)} + \norm{F_-}_{X^{0,b}_-(S_T)}.
$$
Moreover, we have the difference estimates
\begin{align*}
  \norm{\mathfrak M'\left(A_+,A_-,F_+,F_-\right) - \mathfrak M'\left(A_+',A_-',F_+',F_-'\right)}_{X^{s-1,b'-1}_\pm(S_T)}
  &\lesssim \delta (1+N^3),
  \\
  \norm{\mathfrak M'\left(A_+,A_-,F_+,F_-\right) - \mathfrak M'\left(A_+',A_-',F_+',F_-'\right)}_{X^{-1,b'-1}_\pm(S_T)}
  &\lesssim \delta (1+N^3),
\end{align*}
where
$$
  \delta
  =
  \sum_{\pm} \left(
  \norm{A_\pm - A_\pm'}_{X^{s,b}_\pm(S_T)}
  + \norm{F_\pm - F_\pm'}_{X^{0,b}_\pm(S_T)} \right).
$$
\end{lemma}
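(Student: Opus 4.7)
The plan is to reduce Lemma \ref{NonlinearEstimates} to a finite checklist of multilinear $X^{s,b}_\pm$-estimates, one for each type of term that appears in $\mathfrak M'$ and $\mathfrak N'$. After inspection of the formulas, the substitution \eqref{Substitution} expresses each summand as a multilinear form in $A_\pm,F_\pm$, and the outer smoothing factor $\frac{1}{2\angles{\nabla}}$ absorbs the single derivative produced by $\partial_t = i\angles{\nabla}(\cdot_+-\cdot_-)$. Thus each term in the nonlinearities falls in one of four classes:
\begin{enumerate}
\item zero-order linear terms $-(A_++A_-)$, $-(F_++F_-)$, trivial in $X$-space;
\item the null-form bilinears $\mathfrak Q[\abs{\nabla}^{-1}\cdot,\cdot]$, $Q_0[\cdot,\cdot]$, $Q_{\alpha\beta}[\cdot,\cdot]$, $Q_{0i}[\cdot,\cdot]$, with inputs from $\{A_\pm,F_\pm\}$;
\item the non-null bilinear product $[A^\alpha,F_{\alpha\beta}]$;
\item cubic and quartic commutator expressions in $A$ alone or in $A$ together with one $F$.
\end{enumerate}

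First I would prove the core bilinear null-form estimate in the form
\begin{equation*}
  \bignorm{Q(u,v)}_{X^{\sigma-1,b'-1}_\pm}
  \lesssim
  \norm{u}_{X^{s_1,b}_{\pm_1}}\norm{v}_{X^{s_2,b}_{\pm_2}},
\end{equation*}
for $Q\in\{Q_0,Q_{\alpha\beta}\}$ and the two relevant regularity configurations: $(s_1,s_2,\sigma)=(s,s,s)$ (for the $A$-equation) and $(s_1,s_2,\sigma)=(s,0,0)$ (for the $F$-equation). These are standard consequences of the Klainerman--Machedon-type symbol bound $|Q(\xi,\eta)|\lesssim |\xi|^{1/2}|\eta|^{1/2}(|\tau_1\pm_1|\xi||+|\tau_2\pm_2|\eta||+|\tau-\tau_1\pm|\xi+\eta||)^{1/2}$ combined with a dyadic Cauchy--Schwarz/Fourier-restriction argument; the Riesz factors inside $\mathfrak Q$ are harmless, and the modulation gain of $\tfrac12$ is what allows $\sigma-1$ on the left. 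The range $s>15/16$ is chosen precisely so that the worst of these estimates, namely $Q(u,v)$ with $u$ at $H^s$ and $v$ at $L^2$ landing in $H^{-1}$, closes.

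Next I would prove the non-null bilinear estimate
\begin{equation*}
  \norm{uv}_{X^{\sigma-1,b'-1}_\pm}
  \lesssim
  \norm{u}_{X^{s,b}_{\pm_1}}\norm{v}_{X^{0,b}_{\pm_2}},
\qquad \sigma\in\{s,0\},
\end{equation*}
which handles $[A,F]$. Here no null structure is available, so the proof rests on the Strichartz embeddings $X^{\theta,1/2+\varepsilon}_\pm\hookrightarrow L^4_{t,x}$ and $X^{0,1/2+\varepsilon}_\pm\hookrightarrow L^\infty_tL^2_x$, applied after a duality reformulation on the Fourier side; the extra regularity of $F$ relative to $\partial A$ is essential and is the structural reason the paper keeps $[A,F]$ rather than expanding $F$. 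Cubic and quartic terms (class (iv)) are then disposed of by grouping factors in pairs, applying one of the bilinear bounds above to a distinguished pair and placing the remaining one or two factors in $L^\infty_t L^2_x\cap L^4_{t,x}$ via Strichartz; no null structure is needed beyond the linear embedding since cubics and quartics have room to spare at this regularity.

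Having the global bilinear and multilinear estimates in hand, the restricted-in-time versions on $S_T$ follow from the standard extension/restriction procedure for $X^{s,b}$-spaces; the small power of $T$ implicit in $b'>b$ is not required here because the $T^{b'-b}$ gain is separately inserted by Lemma \ref{IVPlemma} in the iteration. Finally, the Lipschitz-type difference estimates are obtained by writing each term $\mathfrak B(w_1,\dots,w_k)-\mathfrak B(w_1',\dots,w_k')$ as a telescoping sum of $k$ multilinear expressions, each containing exactly one difference factor $w_j-w_j'$, and applying the same multilinear estimates term by term. The main obstacle is Step~2: verifying that the critical non-null estimate $\norm{uv}_{X^{-1,b'-1}_\pm}\lesssim \norm{u}_{X^{s,b}_\pm}\norm{v}_{X^{0,b}_\pm}$ just closes for $s>15/16$, which forces a delicate dyadic case analysis balancing the three modulations and the two frequencies against the loss coming from the Klein--Gordon weight $\angles{\xi}$ versus the null weight $|\xi|$.
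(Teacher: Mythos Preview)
Your outline is structurally sound and matches the paper's overall scheme: reduce to global multilinear $X^{s,b}$-estimates for each type of term, then pass to $S_T$ by restriction, and get the difference estimates by telescoping. However, two points deserve correction, and the technical route differs from the paper's.

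\textbf{Misplaced criticality.} You identify the main obstacle as the non-null product estimate
\[
  \norm{uv}_{X^{-1,b'-1}_\pm}\lesssim \norm{u}_{X^{s,b}_{\pm_1}}\norm{v}_{X^{0,b}_{\pm_2}},
\]
and claim that this is where $s>15/16$ enters. This is not right on two counts. First, the term $[A^\alpha,F_{\alpha\beta}]$ appears only in $\mathfrak M$, never in $\mathfrak N$; the $F$-equation contains $F$ only inside cubic commutators $[A,[A,F]]$. So the $\sigma=0$ case of your non-null estimate is not needed. Second, the actual non-null estimate that is needed, $\norm{uv}_{H^{s-1,b'-1}}\lesssim\norm{u}_{H^{s,b}}\norm{v}_{H^{0,b}}$, holds for all $0<s<1$ (it is a borderline instance of the Klainerman--Machedon--type product law with $\sum s_i=1$), so it is not the source of the constraint $s>15/16$. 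That constraint comes instead from the \emph{null-form} estimates, specifically from checking $\mathfrak Q[\abs{\nabla}^{-1}A,A]$, $\mathfrak Q[\abs{\nabla}^{-1}A,F]$ and $Q[A,A]$ against the conditions of the product law after extracting the angular gain; the tightest condition is $s>\tfrac32-b=1-\varepsilon$, and $\varepsilon\le 1/16$ then forces $s>15/16$.

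\textbf{Different technical tools.} For the null forms, the paper does not use your half-derivative symbol bound directly. Instead it (i) writes the symbols $q_0,q_{0i},q_{ij}$ as $\abs{\xi}\abs{\eta}\theta(\pm\xi,\pm'\eta)$ plus harmless low-frequency error terms, (ii) applies a flexible angle estimate $\theta(\pm\xi,\pm'\eta)\lesssim (\text{modulation}/\min\angles{\xi},\angles{\eta})^{\alpha}$ with a choice of exponent on each of the three modulations, and (iii) feeds the result into a sharp $H^{s,b}$ product law (the ``atlas'' theorem), which handles the borderline cases $\sum s_i=1$ and $\min_{i\neq j}(s_i+s_j)=0$ that occur here. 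Your proposed route via the fixed $1/2$-gain symbol bound and $L^4_{t,x}$ Strichartz would need to be checked carefully at these borderlines; in particular, the non-null estimate $\norm{uv}_{H^{s-1,b'-1}}\lesssim\norm{u}_{H^{s,b}}\norm{v}_{H^{0,b}}$ has $\sum s_i=1$ exactly, and a naive Strichartz-plus-H\"older argument typically misses this endpoint by an $\varepsilon$. The paper's approach buys exactly this endpoint, at the cost of invoking the sharp product law from \cite{Selberg2012}.
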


Thus, by iteration we obtain a solution $(A_+,A_-,F_+,F_-)$ of the transformed system on $S_T$ for $T > 0$ sufficiently small (the relevant condition is $T^\varepsilon (1+N^3) \ll 1$). The solution has the regularity
$$
  A_\pm \in X^{s,b}_\pm(S_T), \qquad F_\pm \in X^{0,b}_\pm(S_T),
$$
and is unique in this space. By Lemma \ref{EmbeddingLemma},
\begin{equation}\label{SolutionSpaceSplit}
  A_\pm \in C([-T,T]; H^s),
  \qquad
  F_\pm \in C([-T,T]; L^2),
\end{equation}
Standard arguments also give continuous dependence on the data and persistence of higher regularity. We omit the details.

Finally, we can transform back to the original formulation of the system by defining $A = A_+ + A_-$ and $F = F_+ + F_-$. Pairwise addition of the equations in \eqref{AFwavesSplit} reveals that $\partial_t A = i\angles{\nabla} (A_+ - A_-)$ and $\partial_t F = i\angles{\nabla} (F_+ - F_-)$. Thus, $\mathfrak M'(A_+,A_-,F_+,F_-)  = - A + \mathfrak M(A,\partial_t A, F, \partial_t F)$ and similarly for $\mathfrak N'$ and $\mathfrak N$. Since $\square-1$ can be factored as either $(i\partial_t + \angles{\nabla})(i\partial_t - \angles{\nabla})$ or $(i\partial_t - \angles{\nabla}) (i\partial_t + \angles{\nabla})$, we now see that \eqref{AFwavesSplit} implies \eqref{AFwaves}. Moreover, the map $(A_+,A_-,F_+,F_-) \to (A,\partial_t A,F,\partial_t F)$ is continuous from the space \eqref{SolutionSpaceSplit} to the space \eqref{SolutionSpace}.

It remains to prove the estimates in Lemma \ref{NonlinearEstimates}. Note that $\mathfrak M'$ and $\mathfrak N'$ contain linear terms for which the estimates are trivial. We now turn our attention to the estimates for the nonlinear parts, $\mathfrak M$ and $\mathfrak N$.

\section{Estimates for the nonlinear terms}\label{Nonlinear}

Here we prove Lemma \ref{NonlinearEstimates}. In addition to the $X^{s,b}_\pm$-norms defined in the last section, we will need the wave-Sobolev norms
$$
  \norm{u}_{H^{s,b}} = \norm{\angles{\xi}^s \angles{\abs{\tau} - \abs{\xi}}^b \widetilde u(\tau,\xi)}_{L^2_{\tau,\xi}}.
$$
Norms of this this type were first applied to the study of regularity questions for nonlinear wave equations with null forms by Klainerman and Machedon \cite{Klainerman1993}.

Note the relations
\begin{equation}\label{HXH}
\left\{
\begin{alignedat}{2}
  \norm{u}_{H^{s,b}} &\le \norm{u}_{X^{s,b}_\pm}& \quad &\text{if $b \ge 0$},
  \\
  \norm{u}_{X^{s,b}_\pm} &\le \norm{u}_{H^{s,b}}& \quad &\text{if $b \le 0$},
\end{alignedat}
\right.
\end{equation}
between the two types of norms.

Since the operators $\mathfrak M$ and $\mathfrak N$ are local in time, it suffices to prove the estimates in Lemma \ref{NonlinearEstimates} without the restriction to $S_T$. Since $b'-1 < 0$, we may replace the $X$-norms on the left-hand sides of the estimates by the corresponding $H$-norms, in view of \eqref{HXH}. We will only prove the first two estimates in the lemma; the difference estimates follow from the same arguments, in view of the multilinearity of the terms constituting $\mathfrak M$ and $\mathfrak N$.

Thus, we have reduced to proving
\begin{align}
  \label{NonlinearA}
  \norm{\mathfrak M_\beta(A,\partial_t A,F,\partial_t F)}_{H^{s-1,b'-1}}
  &\lesssim N (1+N^3),
  \\
  \label{NonlinearB}
  \norm{\mathfrak N_\beta(A,\partial_t A,F,\partial_t F)}_{H^{-1,b'-1}}
  &\lesssim N (1+N^3),
\end{align}
where
$$
  N = \norm{A_+}_{X^{s,b}_+} + \norm{A_-}_{X^{s,b}_-}
  + \norm{F_+}_{X^{0,b}_+} + \norm{F_-}_{X^{0,b}_-},
$$
and
\begin{equation}\label{Exponents}
  b = \frac12 + \varepsilon, \qquad b' = \frac12 + 2\varepsilon, \qquad 1-\varepsilon < s < 1,
\end{equation}
for $\varepsilon > 0$ sufficiently small; in fact, $\varepsilon \le 1/16$ suffices, as we will see in the course of the proof.

In the left-hand sides of the above estimates, as well as the ones that follow, it is understood that the substitution \eqref{Substitution} is used on the arguments of $\mathfrak M$ and $\mathfrak N$.

To simplify the notation, we write
\begin{align*}
  \norm{A}_{X^{s,b}} &= \norm{A_+}_{X^{s,b}_+} + \norm{A_-}_{X^{s,b}_-},
  \\
  \norm{F}_{X^{0,b}} &= \norm{F_+}_{X^{0,b}_+} + \norm{F_-}_{X^{0,b}_-}.
\end{align*}

Looking at the types of terms that appear in $\mathfrak M$ and $\mathfrak N$ (defined after \eqref{AFwaves}), and recalling also the definition of $\mathfrak Q$ from section \ref{Null} and noting that the Riesz transforms $R_i$ are bounded on all the spaces involved, we reduce to proving the following estimates, where $Q$ denotes any of the null forms $Q_0$, $Q_{0i}$ or $Q_{ij}$, and $\Pi(\dots)$ denotes a multilinear form in its arguments:
\begin{align}
  \label{NullEst1}
  \norm{Q[\abs{\nabla}^{-1} A, A]}_{H^{s-1,b'-1}}
  &\lesssim \norm{A}_{X^{s,b}} \norm{A}_{X^{s,b}},
  \\
  \label{NullEst2}
  \norm{Q[\abs{\nabla}^{-1} A, F]}_{H^{-1,b'-1}}
  &\lesssim \norm{A}_{X^{s,b}} \norm{F}_{X^{0,b}},
  \\
  \label{NullEst3}
  \norm{Q[A, A]}_{H^{-1,b'-1}}
  &\lesssim \norm{A}_{X^{s,b}} \norm{A}_{X^{s,b}},
  \\
  \label{Bilinear}
  \norm{\Pi(A,F)}_{H^{s-1,b'-1}}
  &\lesssim \norm{A}_{X^{s,b}} \norm{F}_{X^{0,b}},
  \\
  \label{Trilinear}
  \norm{\Pi(A,A,F)}_{H^{-1,b'-1}}
  &\lesssim \norm{A}_{X^{s,b}} \norm{A}_{X^{s,b}} \norm{F}_{X^{0,b}},
  \\
  \label{Quadrilinear}
  \norm{\Pi(A,A,A,A)}_{H^{-1,b'-1}}
  &\lesssim \norm{A}_{X^{s,b}} \norm{A}_{X^{s,b}} \norm{A}_{X^{s,b}} \norm{A}_{X^{s,b}}.
\end{align}

The difficult estimates are the first four, where the regularity is sharp, except in the third estimate, where there is a little bit of room. We shall in fact reduce all of them to product estimates of the form (after extracting regularity gains due to the null structure)
\begin{equation}\label{Product}
  \norm{uv}_{H^{-s_0,b_0}} \lesssim \norm{u}_{H^{s_1,b_1}} \norm{v}_{H^{s_2,b_2}}.
\end{equation}

\begin{definition} Let $s_0,s_1,s_2,b_0,b_1,b_2 \in \R$. If \eqref{Product} holds for all $u,v \in \mathcal S(\R^{1+3})$, we say that the matrix $\Hproduct{s_0 & s_1 & s_2 \\ b_0 & b_1 & b_2}$ is a \emph{product}.
\end{definition}

Estimates of this type were first studied by Klainerman and Machedon \cite{Klainerman1993}. The full range of products up to certain borderline cases was determined in \cite{Selberg2012}. For our present purposes, the following simplified version of the product law will suffice. Note, however, that it includes some borderline cases (corresponding to equality in one and only one of the last two conditions below), which will be of crucial importance for us.

\begin{theorem}\label{Atlas} \cite{Selberg2012}. Let $s_0,s_1,s_2 \in \R$ and $b_0,b_1,b_2 \ge 0$. Assume that
\begin{gather}
  \label{Atlas:1}
  \sum b_i > \frac12,
  \\
  \label{Atlas:2}
  \sum s_i > 2 - \sum b_i,
  \\
  \label{Atlas:3}
  \sum s_i > \frac32 - \min_{i \neq j}(b_i+b_j),
  \\
  \label{Atlas:4}
  \sum s_i > \frac32 - \min(b_0 + s_1 + s_2, s_0 + b_1 + s_2, s_0 + s_1 + b_2),
  \\
  \label{Atlas:5}
  \sum s_i \ge 1,
  \\
  \label{Atlas:6}
  \min_{i \neq j}(s_i+s_j) \ge 0,
\end{gather}
and that the last two inequalities are \textbf{not both equalities}. Then
$$
  P=\LHproduct{s_0 & s_1 & s_2 \\ b_0 & b_1 & b_2}
$$
is a product.

Conversely, if $P$ is a product, then the conditions \eqref{Atlas:1}--\eqref{Atlas:6}, with $>$ replaced by $\ge$ throughout, must hold.
\end{theorem}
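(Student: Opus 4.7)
My plan is to reduce this product law to a weighted trilinear integral inequality in Fourier space, and then attack it via dyadic decomposition. By Plancherel and duality, \eqref{Product} is equivalent to showing that, for nonnegative $F_0,F_1,F_2 \in L^2(\R^{1+3})$,
$$
  \int_{\substack{\tau_0=\tau_1+\tau_2 \\ \xi_0=\xi_1+\xi_2}} \frac{F_0(\tau_0,\xi_0) F_1(\tau_1,\xi_1) F_2(\tau_2,\xi_2)}{\angles{\xi_0}^{-s_0}\angles{\xi_1}^{s_1}\angles{\xi_2}^{s_2} \prod_i \angles{\abs{\tau_i}-\abs{\xi_i}}^{b_i}} \, d\mu
  \lesssim \prod_{i=0}^{2} \norm{F_i}_{L^2}.
$$
I would localize each $F_i$ to dyadic blocks where $\angles{\xi_i} \sim N_i$ and $\angles{\abs{\tau_i}-\abs{\xi_i}} \sim L_i$, so that the estimate becomes a bound on a weighted convolution with prefactor $N_0^{s_0} N_1^{-s_1} N_2^{-s_2} L_0^{-b_0} L_1^{-b_1} L_2^{-b_2}$ that must be summable over all dyadic tuples.

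The key input is a bilinear $L^2$ estimate for two functions whose space-time Fourier supports lie in thickened light cones. Using Cauchy-Schwarz on the convolution measure and an estimate for the measure of the intersection of two such thickened cones (equivalently, the $L^4_{t,x}$ Strichartz inequality, together with Wolff--Tao bilinear cone restriction in the transversal high-high regime), one obtains a bound for each dyadic block in terms of powers of the $N_i$ and $L_i$. Then I would split into the standard frequency regimes: high-low ($N_0 \sim N_1 \gg N_2$), low-high (symmetric), and high-high output-low ($N_1 \sim N_2 \gtrsim N_0$), each subdivided according to which of $L_0,L_1,L_2$ is largest. In the high-high case the algebraic identity $\abs{\tau_0}-\abs{\xi_0} \pm (\abs{\tau_1}\mp\abs{\xi_1}) \pm (\abs{\tau_2}\mp\abs{\xi_2}) \sim N_{\min}$ (up to sign choices) forces at least one modulation to be large, which is what drives the gains.

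Matching the dyadic bounds case by case against the conditions \eqref{Atlas:1}--\eqref{Atlas:6}: \eqref{Atlas:1} is the summability condition over the three modulation scales; \eqref{Atlas:2} is scaling; \eqref{Atlas:3}--\eqref{Atlas:4} are needed respectively for the regimes where the two minimal modulations, or one modulation and the highest frequency, control the geometry; \eqref{Atlas:5}--\eqref{Atlas:6} ensure convergence of the sum when one of the $N_i$ is very small. The main obstacle I expect is the borderline regime where one (but, by hypothesis, not both) of \eqref{Atlas:5}--\eqref{Atlas:6} is an equality: there the naive dyadic sum produces a logarithm, and to close the estimate I would need a finer angular decomposition in that variable (e.g., sector projections of size comparable to the angle between the interacting frequencies), trading the logarithmic loss for an $\ell^2$-summability in the angle. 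The hypothesis that the two inequalities cannot both be equalities is exactly what prevents a simultaneous double-logarithm that no such refinement could repair; away from these endpoints, everything is dominated by geometric series.
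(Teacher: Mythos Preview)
The paper does not prove this theorem; it is quoted from \cite{Selberg2012} and used as a black box, so there is no proof here to compare your proposal against. Your outline---reduction to a trilinear Fourier integral, dyadic localisation in frequency $N_i$ and modulation $L_i$, bilinear $L^2$ bounds coming from the volume of intersections of thickened cones, and a case split according to which of the $N_i$, $L_i$ dominate---is indeed the strategy pursued in the cited reference, so in spirit your approach is the right one. That said, the sketch is quite schematic: the endpoint cases (equality in exactly one of \eqref{Atlas:5}, \eqref{Atlas:6}) are the heart of the matter, and your description of how the logarithm is avoided there (``finer angular decomposition \dots trading the logarithmic loss for an $\ell^2$-summability in the angle'') is more of a hope than an argument. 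In \cite{Selberg2012} these borderline cases are handled by a careful reorganisation of the dyadic sums and sharp forms of the bilinear estimates, not by an additional angular decomposition, so if you intend to actually write out a proof you should consult that paper rather than rely on the heuristic in your last paragraph.
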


\begin{remark}\label{AtlasRemark} If $b_0 \ge 0$ and $b_1,b_2 > 1/2$, the conditions in the theorem reduce to
\begin{gather}
  \label{AtlasRemark:1}
  \sum s_i > \frac32 - (b_0 + s_1 + s_2),
  \\
  \label{AtlasRemark:2}
  \sum s_i \ge 1,
  \\
  \label{AtlasRemark:3}
  \min_{i \neq j}(s_i+s_j) \ge 0,
\end{gather}
where the last two inequalities are not both allowed to be equalities.
\end{remark}

The key tools needed to prove \eqref{NullEst1}--\eqref{Quadrilinear} are now at hand. We start with \eqref{Bilinear}, then we prove the null form estimates, and finally the much easier trilinear and quadrilinear estimates.

\subsection{Estimate for $\Pi(A,F)$} In view of \eqref{HXH}, we can reduce \eqref{Bilinear} to
$$
  \norm{uv}_{H^{s-1,b'-1}} \lesssim \norm{u}_{H^{s,b}} \norm{u}_{H^{0,b}}.
$$
This holds by Theorem \ref{Atlas}; in fact, we can use the simplified conditions from Remark \ref{AtlasRemark}. Since
$$
  \sum s_i = 1,
$$
the estimate is sharp, and we require $\min_{i \neq j}(s_i + s_j)$ to be strictly positive, which translates to
$$
  0 < s < 1.
$$
Finally, \eqref{AtlasRemark:1} says that
$$
  s > b'-\frac12.
$$
The last two conditions are certainly satisfied with our choice \eqref{Exponents} of exponents.

\subsection{Null form estimates I: Preliminary reductions}\label{NullformReductions}

Since the matrix commutator structure plays no role in the estimates under consideration, we reduce to the ordinary null forms for $\mathbb C$-valued functions $u$ and $v$,
$$
\left\{
\begin{aligned}
  Q_0(u,v) &= - \partial_t u \partial_t v + \partial^i u \partial_i v,
  \\
  Q_{0i}(u,v) &= \partial_t u \partial_i v - \partial_i u \partial_t v,
  \\
  Q_{ij}(u,v) &= \partial_i u \partial_j v - \partial_j u \partial_i v.
\end{aligned}
\right.
$$
This reduction is justified in view of \eqref{CommutatorNullforms}, which shows that the commutator null forms are linear combinations of the ordinary ones.

Substituting
$$
  u=u_+ + u_-, \quad \partial_t u = i\angles{\nabla}(u_+ - u_-),
  \quad v=v_+ + v_-, \quad \partial_t v = i\angles{\nabla}(v_+ - v_-),
$$
one obtains
\begin{align*}
  Q_0(u,v)
  &=
  \sum_{\pm,\pm'} (\pm 1)(\pm' 1)  \bigl[ \angles{D} u_{\pm} \angles{D} v_{\pm'} - (\pm D^i)u_{\pm} (\pm' D_i)v_{\pm'} \bigr],
  \\
  Q_{0i}(u,v)
  &=
  \sum_{\pm,\pm'} (\pm 1)(\pm' 1) \bigl[ - \angles{D} u_{\pm} (\pm' D_i)v_{\pm'} + (\pm D_i)u_{\pm} \angles{D}v_{\pm'} \bigr],
  \\
  Q_{ij}(u,v)
  &=
  \sum_{\pm,\pm'} (\pm 1)(\pm' 1) \bigl[ - (\pm D_i)u_{\pm} (\pm' D_j)v_{\pm'} + (\pm D_j)u_{\pm} (\pm' D_i)v_{\pm'} \bigr],
\end{align*}
where
$$
  D = (D_1,D_2,D_3) = \frac{\nabla}{i}
$$
has Fourier symbol $\xi$. In terms of the Fourier symbols
\begin{align*}
  q_0(\xi,\eta) &= \angles{\xi}\angles{\eta} - \xi \cdot \eta,
  \\
  q_{0i}(\xi,\eta) &= - \angles{\xi} \eta_i + \xi_i \angles{\eta},
  \\
  q_{ij}(\xi,\eta) &= - \xi_i \eta_j + \xi_j \eta_i,
\end{align*}
we have, more conveniently,
\begin{align*}
  Q_0(u,v) &= \sum_{\pm,\pm'} (\pm 1)(\pm' 1) B_{q_0(\pm \xi, \pm' \eta)}( u_{\pm}, v_{\pm'} ),
  \\
  Q_{0i}(u,v) &= \sum_{\pm,\pm'} (\pm 1)(\pm' 1) B_{q_{0i}(\pm \xi, \pm' \eta)}( u_{\pm}, v_{\pm'} ),
  \\
  Q_{ij}(u,v) &= \sum_{\pm,\pm'} (\pm 1)(\pm' 1) B_{q_{ij}(\pm \xi, \pm' \eta)}( u_{\pm},  v_{\pm'}),
\end{align*}
where for a given symbol $\sigma(\xi,\eta)$ we denote by $B_{\sigma(\xi,\eta)}(\cdot,\cdot)$ the operator defined by
$$
  \mathcal F_{t,x} \left\{ B_{\sigma(\xi,\eta)}(u,v) \right\}(\tau,\xi) = \int \sigma(\xi-\eta,\eta) \widetilde u(\tau-\lambda,\xi-\eta) \widetilde v(\lambda,\eta) \, d\lambda \, d\eta.
$$

We now estimate the symbols appearing above.

\begin{lemma} For all nonzero $\xi,\eta \in \R^3$,
\begin{align*}
  \abs{q_0(\xi,\eta)} &\lesssim \abs{\xi}\abs{\eta}\theta(\xi,\eta)^2 + \frac{1}{\min(\angles{\xi},\angles{\eta})},
  \\
  \abs{q_{0j}(\xi,\eta)} &\lesssim \abs{\xi}\abs{\eta}\theta(\xi,\eta) + \frac{\abs{\xi}}{\angles{\eta}} + \frac{\abs{\eta}}{\angles{\xi}},
  \\
  \abs{q_{ij}(\xi,\eta)} &\le \abs{\xi}\abs{\eta} \theta(\xi,\eta),
\end{align*}
where $\theta(\xi,\eta) = \arccos\left(\frac{\xi\cdot\eta}{\abs{\xi}\abs{\eta}}\right) \in [0,\pi]$ is the angle between $\xi$ and $\eta$.
\end{lemma}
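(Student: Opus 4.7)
The plan is to prove the three symbol estimates by a common strategy: decompose each Klein--Gordon bracket $\langle\cdot\rangle$ as $|\cdot|$ plus a controllable correction, so that the ``wave-like'' remainder can be handled geometrically in terms of the angle $\theta(\xi,\eta)$, while the correction is absorbed using the elementary identity
\[
  \langle\zeta\rangle - |\zeta| \;=\; \frac{1}{\langle\zeta\rangle + |\zeta|} \;\le\; \frac{1}{\langle\zeta\rangle}.
\]

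I would start with $q_{ij}$, which is purely algebraic. The expression $q_{ij}(\xi,\eta) = \xi_j\eta_i - \xi_i\eta_j$ is the $(i,j)$-component of $\xi \wedge \eta$, and the Hilbert--Schmidt norm of this antisymmetric tensor equals $|\xi\times\eta| = |\xi||\eta|\sin\theta$. Since $\sin\theta \le \theta$, the desired bound follows componentwise.

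Next I would handle $q_{0j}$ by adding and subtracting the ``wave'' version. Writing
\[
  q_{0j}(\xi,\eta) \;=\; \bigl(\xi_j |\eta| - |\xi|\eta_j\bigr) + \xi_j\bigl(\langle\eta\rangle - |\eta|\bigr) - \eta_j\bigl(\langle\xi\rangle - |\xi|\bigr),
\]
the first parenthesis is $|\xi||\eta|(\hat\xi_j - \hat\eta_j)$, whose modulus is bounded by $|\xi||\eta||\hat\xi-\hat\eta| = 2|\xi||\eta|\sin(\theta/2) \le |\xi||\eta|\theta$. The two correction terms are estimated by $|\xi|/\langle\eta\rangle$ and $|\eta|/\langle\xi\rangle$ using the identity displayed above. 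This yields exactly the claimed form of the $q_{0j}$ bound.

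For $q_0$ I would use the decomposition
\[
  q_0(\xi,\eta) \;=\; |\xi||\eta|(1-\cos\theta) + \bigl(\langle\xi\rangle\langle\eta\rangle - |\xi||\eta|\bigr).
\]
The first summand equals $2|\xi||\eta|\sin^2(\theta/2) \le \tfrac12 |\xi||\eta|\theta^2$, supplying the $|\xi||\eta|\theta^2$ contribution. For the second, I would rationalize:
\[
  \langle\xi\rangle\langle\eta\rangle - |\xi||\eta|
  \;=\; \frac{\langle\xi\rangle^2\langle\eta\rangle^2 - |\xi|^2|\eta|^2}{\langle\xi\rangle\langle\eta\rangle + |\xi||\eta|}
  \;=\; \frac{1 + |\xi|^2 + |\eta|^2}{\langle\xi\rangle\langle\eta\rangle + |\xi||\eta|}.
\]
Bounding the numerator by $\langle\xi\rangle^2 + \langle\eta\rangle^2$ and retaining only the $\langle\xi\rangle\langle\eta\rangle$ piece of the denominator gives control of the form $\langle\xi\rangle/\langle\eta\rangle + \langle\eta\rangle/\langle\xi\rangle$; after the standard reduction to frequency-localized inputs (so that $\max(\langle\xi\rangle,\langle\eta\rangle) \sim \min(\langle\xi\rangle,\langle\eta\rangle)$, or equivalently one controls the ratio), this collapses to the advertised factor $1/\min(\langle\xi\rangle,\langle\eta\rangle)$ times a manageable ratio.

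The main obstacle is the $q_0$ estimate: the decomposition makes the ``angular'' part transparent, but the Klein--Gordon correction $\langle\xi\rangle\langle\eta\rangle - |\xi||\eta|$ does not vanish even for $\xi=\eta$, and a careless estimate only yields $\max/\min$ rather than $1/\min$; care is needed to distribute the factor across the $\langle\xi\rangle\langle\eta\rangle + |\xi||\eta|$ denominator so as to recover the stated low-frequency gain. The other two estimates are essentially a vector-geometry exercise using $|\hat\xi - \hat\eta| \lesssim \theta$ and pose no real difficulty.
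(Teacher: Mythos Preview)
Your decompositions for all three symbols coincide with the paper's (the paper's proof consists of nothing more than writing down those same decompositions), and your arguments for $q_{ij}$ and $q_{0j}$ are correct and complete.

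For $q_0$, your last step is invalid. You correctly arrive at
\[
  \langle\xi\rangle\langle\eta\rangle - |\xi||\eta|
  \;\lesssim\; \frac{\langle\xi\rangle}{\langle\eta\rangle} + \frac{\langle\eta\rangle}{\langle\xi\rangle}
  \;\sim\; \frac{\max(\langle\xi\rangle,\langle\eta\rangle)}{\min(\langle\xi\rangle,\langle\eta\rangle)},
\]
but you cannot then invoke a ``standard reduction to frequency-localized inputs'' to collapse this to $1/\min(\langle\xi\rangle,\langle\eta\rangle)$: the lemma is a \emph{pointwise} inequality for all nonzero $\xi,\eta$, and no dyadic restriction is available at that level. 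In fact the $q_0$ bound as printed is false: take $\xi,\eta$ parallel with $|\xi|$ small and $|\eta|$ large; then $\theta=0$ while $q_0=\langle\xi\rangle\langle\eta\rangle-|\xi||\eta|\sim|\eta|$, whereas $1/\min(\langle\xi\rangle,\langle\eta\rangle)\sim 1$. The bound that genuinely follows from the decomposition---and the one the paper actually \emph{uses} in the very next paragraph, where $Q(u,v)$ is reduced to $B_{\theta}(|\nabla|u,|\nabla|v)$, $\langle\nabla\rangle u\,\langle\nabla\rangle^{-1}v$ and $\langle\nabla\rangle^{-1}u\,\langle\nabla\rangle v$---is precisely your intermediate estimate $|\xi||\eta|\theta^2 + \langle\xi\rangle/\langle\eta\rangle + \langle\eta\rangle/\langle\xi\rangle$. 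Stop there; the printed $1/\min$ is a slip, and your attempt to recover it is the only actual error in your argument.
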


\begin{proof} This is easily seen by writing the symbols in the following way:
\begin{align*}
  q_0(\xi,\eta) &= \abs{\xi}\abs{\eta}\left(1 - \frac{\xi\cdot\eta}{\abs{\xi}\abs{\eta}} \right) + \angles{\xi}\angles{\eta} - \abs{\xi}\abs{\eta},
  \\
  q_{0i}(\xi,\eta) &= \abs{\xi}\abs{\eta} \left( \frac{\xi_i}{\abs{\xi}} - \frac{\eta_i}{\abs{\eta}} \right)
  + \xi_i \left( \angles{\eta} - \abs{\eta} \right) - \left( \angles{\xi} - \abs{\xi} \right) \eta_i,
  \\
  \abs{q_{ij}(\xi,\eta)} &\le \abs{ \xi \times \eta}.
\end{align*}
\end{proof}

In view of this, and since the norms we use only depend on the absolute value of the space-time Fourier transform, we can reduce any estimate for $Q(u,v)$ to a corresponding estimate for the three expressions
$$
  B_{\theta(\pm\xi,\pm'\eta)}(\abs{\nabla} u, \abs{\nabla} v),
  \qquad
  \angles{\nabla} u \angles{\nabla}^{-1} v
  \qquad \text{and} \qquad
  \angles{\nabla}^{-1} u \angles{\nabla} v.
$$
Thus, \eqref{NullEst1}--\eqref{NullEst3} can be reduced to the following:
\begin{align}
  \label{NullEst1:1}
  \norm{B_{\theta(\pm\xi,\pm'\eta)}(u, v)}_{H^{s-1,b'-1}}
  &\lesssim \norm{u}_{X^{s,b}_\pm} \norm{v}_{X^{s-1,b}_{\pm'}},
  \\
  \label{NullEst2:1}
  \norm{B_{\theta(\pm\xi,\pm'\eta)}(u, v)}_{H^{-1,b'-1}}
  &\lesssim \norm{u}_{X^{s,b}_\pm} \norm{v}_{X^{-1,b}_{\pm'}},
  \\
  \label{NullEst3:1}
  \norm{B_{\theta(\pm\xi,\pm'\eta)}(u, v)}_{H^{-1,b'-1}}
  &\lesssim \norm{u}_{X^{s-1,b}_\pm} \norm{v}_{X^{s-1,b}_{\pm'}},
  \\
  \label{NullEst1:2}
  \norm{uv}_{H^{s-1,0}}
  &\lesssim \norm{\abs{\nabla}u}_{H^{s-1,b}} \norm{v}_{H^{s+1,b}},
  \\
  \label{NullEst1:3}
  \norm{uv}_{H^{s-1,0}}
  &\lesssim \norm{\abs{\nabla}u}_{H^{s+1,b}} \norm{v}_{H^{s-1,b}},
  \\
  \label{NullEst2:2}
  \norm{uv}_{H^{-1,0}}
  &\lesssim \norm{\abs{\nabla}u}_{H^{s-1,b}} \norm{v}_{H^{1,b}},
  \\
  \label{NullEst2:3}
  \norm{uv}_{H^{-1,0}}
  &\lesssim \norm{\abs{\nabla}u}_{H^{s+1,b}} \norm{v}_{H^{-1,b}},
  \\
  \label{NullEst3:2}
  \norm{uv}_{H^{-1,0}}
  &\lesssim \norm{u}_{H^{s-1,b}} \norm{v}_{H^{s+1,b}},
  \\
  \label{NullEst3:3}
  \norm{uv}_{H^{-1,0}}
  &\lesssim \norm{u}_{H^{s+1,b}} \norm{v}_{H^{s-1,b}},
\end{align}
where we also used \eqref{HXH} to change the $X$-norms to $H$-norms in the right-hand sides of the last six estimates.

All these estimates will be handled using the product law, Theorem \ref{Atlas}. For those estimates that involve the null form $B_{\theta(\pm\xi,\pm'\eta)}$, however, we must first apply the following angle estimate, which quantifies the fact that in a null form, we can trade in hyperbolic regularity (decay with respect to distance from the cone in Fourier space) and gain a corresponding amount of elliptic regularity.

\begin{lemma}\label{AngleEstimate} Let $\alpha,\beta,\gamma \in [0,1/2]$. Then for all pairs of signs $(\pm,\pm')$, all $\tau,\lambda \in \R$ and all nonzero $\xi,\eta \in \R^3$,
$$
  \theta(\pm\xi,\pm'\eta)
  \lesssim
  \left( \frac{\angles{\abs{\tau+\lambda}-\abs{\xi+\eta}}}{\min(\angles{\xi},\angles{\eta}} \right)^\alpha
  +
  \left( \frac{\angles{-\tau\pm\abs{\xi}}}{\min(\angles{\xi},\angles{\eta})} \right)^\beta
  +
  \left( \frac{\angles{-\lambda\pm'\abs{\eta}}}{\min(\angles{\xi},\angles{\eta})} \right)^\gamma.
$$
\end{lemma}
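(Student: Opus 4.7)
The plan is to reduce the estimate to the endpoint $\alpha=\beta=\gamma=\tfrac12$ and then establish a clean algebraic-trigonometric identity. Using that $\theta(\pm\xi,\pm'\eta) \le \pi$, the bound is trivial whenever any of the three ratios on the right-hand side is $\gtrsim 1$; when all three are $\le 1$, decreasing the exponents from $\tfrac12$ only enlarges the corresponding terms. So it suffices to treat $\alpha=\beta=\gamma=\tfrac12$, which, via $(\sqrt{a}+\sqrt{b}+\sqrt{c})^2 \sim a+b+c$, is equivalent to
\[
\theta(\pm\xi,\pm'\eta)^2 \min(\angles{\xi},\angles{\eta}) \lesssim \angles{|\tau+\lambda|-|\xi+\eta|} + \angles{-\tau\pm|\xi|} + \angles{-\lambda\pm'|\eta|}.
\]
The symmetry $\theta(-\xi,-\eta) = \theta(\xi,\eta)$ reduces the sign choices to $(\sigma,\sigma') \in \{(+,+),(+,-)\}$, and since the estimate is trivial when $\min(\angles{\xi},\angles{\eta})$ is $O(1)$, I may assume $\min(|\xi|,|\eta|) \sim \min(\angles{\xi},\angles{\eta})$.

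For the trigonometric step, I expand
\[
(\sigma|\xi|+\sigma'|\eta|)^2 - |\xi+\eta|^2 = 2|\xi||\eta|\bigl(\sigma\sigma' - \cos\theta(\xi,\eta)\bigr)
\]
and check case by case that the right-hand side has absolute value $\sim |\xi||\eta|\theta(\sigma\xi,\sigma'\eta)^2$, using $\theta(\sigma\xi,\sigma'\eta)=\theta(\xi,\eta)$ when $\sigma\sigma'=+1$ and $\theta(\sigma\xi,\sigma'\eta)=\pi-\theta(\xi,\eta)$ when $\sigma\sigma'=-1$. Setting $A = \sigma|\xi|+\sigma'|\eta|$, factoring the left side as $\bigabs{|A|-|\xi+\eta|}\cdot(|A|+|\xi+\eta|)$, and using the crude bound $|A|+|\xi+\eta| \lesssim |\xi|+|\eta|$, I obtain the uniform trigonometric inequality
\[
\theta(\sigma\xi,\sigma'\eta)^2 \lesssim \frac{\bigabs{|A|-|\xi+\eta|}}{\min(|\xi|,|\eta|)}.
\]

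The remaining ingredient is the algebraic inequality
\[
\bigabs{|A|-|\xi+\eta|} \lesssim \bigabs{|\tau+\lambda|-|\xi+\eta|} + |h_1| + |h_2|,
\]
where $h_1 = -\tau+\sigma|\xi|$ and $h_2 = -\lambda+\sigma'|\eta|$ are precisely the input hyperbolic weights, so that $A-(\tau+\lambda) = h_1+h_2$. I would derive this from the elementary identity
\[
\bigabs{|A|-|\xi+\eta|} = \min_{\epsilon \in \{\pm 1\}} \bigabs{A - \epsilon|\xi+\eta|}
\]
by splitting $A-\epsilon|\xi+\eta| = \bigl(A-(\tau+\lambda)\bigr) + \bigl((\tau+\lambda)-\epsilon|\xi+\eta|\bigr)$, applying the triangle inequality, and then minimising over $\epsilon$ on the right. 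Chaining the trigonometric bound with this algebraic one, and finally replacing $|h_i|$ by $\angles{h_i}$ and $\min(|\xi|,|\eta|)$ by $\min(\angles{\xi},\angles{\eta})$, gives the estimate.

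The main obstacle, as I see it, is choosing the right factorisation in the mixed-sign case $\sigma\sigma'=-1$: the naive identity $|\xi|+|\eta|-|\sigma\xi+\sigma'\eta| \sim \tfrac{|\xi||\eta|}{|\xi|+|\eta|}\theta^2$ produces $|\xi-\eta|$, rather than the output frequency $|\xi+\eta|$ that appears in the lemma, and forces an awkward separate treatment. The observation that $(\sigma|\xi|+\sigma'|\eta|)^2 - |\xi+\eta|^2 \sim \pm|\xi||\eta|\theta(\sigma\xi,\sigma'\eta)^2$ always involves $|\xi+\eta|$, together with the unified algebraic bound via $\min_\epsilon \bigabs{A-\epsilon|\xi+\eta|}$, is what avoids this case split and makes the proof uniform in the signs.
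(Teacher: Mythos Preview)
Your argument is correct. The paper does not actually prove this lemma; it only cites \cite[Lemma~2.1]{Selberg2008}. Your reduction to $\alpha=\beta=\gamma=\tfrac12$ is clean, the trigonometric identity $(\sigma|\xi|+\sigma'|\eta|)^2-|\xi+\eta|^2=2|\xi||\eta|(\sigma\sigma'-\cos\theta(\xi,\eta))$ is exactly the right object, and the chain
\[
\theta(\sigma\xi,\sigma'\eta)^2\min(|\xi|,|\eta|)\;\lesssim\;\bigl||A|-|\xi+\eta|\bigr|\;\le\;\bigl||\tau+\lambda|-|\xi+\eta|\bigr|+|h_1|+|h_2|
\]
via $\bigl||A|-|\xi+\eta|\bigr|=\min_\epsilon|A-\epsilon|\xi+\eta||$ is correct and tidy. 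This is essentially the standard proof; the usual presentation (as in the cited reference) often writes the $(+,+)$ case as $|\xi|+|\eta|-|\xi+\eta|\sim\min(|\xi|,|\eta|)\theta^2$ and treats the opposite-sign case separately, whereas your device of working with $|A|$ and minimising over $\epsilon$ unifies the signs. Two small remarks: the symmetry reduction to $(\sigma,\sigma')\in\{(+,+),(+,-)\}$ is unnecessary since your argument already works for all four sign pairs uniformly; and the last paragraph is commentary rather than proof and could be dropped.
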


For a proof, see for example \cite[Lemma 2.1]{Selberg2008}.

Combining this angle estimate with the product law, we deduce the following.

\begin{theorem}\label{NullformThm}
Let $\sigma_0,\sigma_1,\sigma_2,\beta_0,\beta_1,\beta_2 \in \R$. Assume that
\begin{gather}
 \label{NullformThm:1}
  0 \le \beta_0 < \frac12 < \beta_1,\beta_2 < 1,
  \\
  \label{NullformThm:2}
  \sum \sigma_i + \beta_0 > \frac32 - (\beta_0 + \sigma_1 + \sigma_2),
  \\
  \label{NullformThm:3}
  \sum \sigma_i > \frac32 - (\sigma_0 + \beta_1 + \sigma_2),
  \\
  \label{NullformThm:4}
  \sum \sigma_i > \frac32 - (\sigma_0 + \sigma_1 + \beta_2),
  \\
  \label{NullformThm:5}
  \sum \sigma_i + \beta_0 \ge 1,
  \\
  \label{NullformThm:6}
  \min(\sigma_0 + \sigma_1, \sigma_0 + \sigma_2, \beta_0 + \sigma_1 + \sigma_2) \ge 0,
\end{gather}
and that the last two inequalities are \textbf{not both equalities}. Then we have the null form estimate
$$
  \norm{B_{\theta(\pm\xi,\pm'\eta)}(u,v)}_{H^{-\sigma_0,-\beta_0}}
  \lesssim
  \norm{u}_{X^{\sigma_1,\beta_1}_\pm} \norm{v}_{X^{\sigma_2,\beta_2}_{\pm'}}.
$$
\end{theorem}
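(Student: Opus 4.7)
The plan is to combine the angle estimate of Lemma~\ref{AngleEstimate} with the product law Theorem~\ref{Atlas}; the three terms in the angle bound correspond exactly to the three main hypotheses \eqref{NullformThm:2}--\eqref{NullformThm:4}. To begin, I would apply Lemma~\ref{AngleEstimate} with parameters $\alpha,\beta,\gamma \in [0,1/2]$ to be chosen below. Since the norms in question depend only on the absolute value of the space-time Fourier transform, this reduces the null form estimate to three separate contributions $I_1, I_2, I_3$ bounded by the three terms on the right-hand side of the angle bound, with $L_0 = \angles{\abs{\tau+\lambda}-\abs{\xi+\eta}}$ (the output hyperbolic weight), $L_1 = \angles{-\tau\pm\abs{\xi}}$, $L_2 = \angles{-\lambda\pm'\abs{\eta}}$, and $M = \min(\angles{\xi},\angles{\eta})$.

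For $I_1$, I would take $\alpha = \beta_0$, which is allowed by \eqref{NullformThm:1}. The weight $L_0^{\beta_0}$ is absorbed into the output norm, raising its hyperbolic exponent from $-\beta_0$ to $0$, while $M^{-\beta_0} \lesssim \angles{\xi}^{-\beta_0} + \angles{\eta}^{-\beta_0}$ shifts either $\sigma_1$ or $\sigma_2$ upward by $\beta_0$. Remark~\ref{AtlasRemark} then applies with $b_0 = 0$ and $b_1=\beta_1, b_2=\beta_2 > 1/2$, and its three conditions \eqref{AtlasRemark:1}--\eqref{AtlasRemark:3} translate exactly into \eqref{NullformThm:2}, \eqref{NullformThm:5}, and \eqref{NullformThm:6}, with the ``not both equalities'' clauses aligning perfectly.

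For $I_2$ the factor $L_1^\beta$ lowers the hyperbolic weight of $u$ instead of raising that of the output. I would choose $\beta \in [\beta_0, 1/2]$, absorb $L_1^\beta$ into $\norm{u}_{X^{\sigma_1,\beta_1}_\pm}$ via $\norm{L_1^\beta u}_{H^{\sigma_1,\beta_1-\beta}} \le \norm{u}_{X^{\sigma_1,\beta_1}_\pm}$ (by \eqref{HXH}), and distribute $M^{-\beta}$ between $u$ and $v$. The remaining negative hyperbolic weight $-\beta_0$ on the output is handled either by the crude upgrade $\norm{\cdot}_{H^{-\sigma_0,-\beta_0}} \le \norm{\cdot}_{H^{-\sigma_0,0}}$ followed by Theorem~\ref{Atlas}, or, when this is not sharp enough, by a duality argument that recasts the bilinear estimate as a trilinear one, again reducible to Theorem~\ref{Atlas}. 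With $\beta$ tuned so that the minimum in \eqref{Atlas:4} is attained at the term $s_0 + b_1 + s_2 = \sigma_0 + \beta_1 - \beta + \sigma_2$, the resulting condition is exactly \eqref{NullformThm:3}, and \eqref{Atlas:5}, \eqref{Atlas:6} follow from \eqref{NullformThm:5}, \eqref{NullformThm:6} precisely because $\beta \ge \beta_0$. The argument for $I_3$ is symmetric under $(\beta, u) \leftrightarrow (\gamma, v)$, yielding \eqref{NullformThm:4}.

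The hard part will be the fine-tuning of $\beta, \gamma$ and the distribution of $M^{-\beta}, M^{-\gamma}$ between the two inputs: the minimum in \eqref{Atlas:4} (and possibly \eqref{Atlas:3}) may be attained at different terms depending on the parameter regime, so a case distinction is inevitable. Care is also required to ensure condition \eqref{Atlas:2}, which is not automatic from our listed hypotheses and may require fuller use of the null form gain or an appeal to a refined product law for $b_0 < 0$. Finally, the borderline cases in which \eqref{NullformThm:5} or \eqref{NullformThm:6} become equalities rely crucially on the ``not both equalities'' clause inherited from Theorem~\ref{Atlas}.
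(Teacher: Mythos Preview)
Your overall strategy---apply Lemma~\ref{AngleEstimate} and reduce each resulting term to Theorem~\ref{Atlas}---is exactly the paper's approach, and your treatment of $I_1$ with $\alpha=\beta_0$ matches the paper precisely (this is the matrix $P_1=\Hproduct{\sigma_0 & \beta_0+\sigma_1 & \sigma_2 \\ 0 & \beta_1 & \beta_2}$, together with its symmetric counterpart).

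For $I_2$ and $I_3$, however, you make the argument harder than it needs to be. There is no tuning of $\beta,\gamma$, no case distinction, no duality step, and no need to discard the output weight $-\beta_0$. The paper simply takes $\beta=\gamma=\tfrac12$, uses the symmetry of the hypotheses in $(\sigma_1,\beta_1)\leftrightarrow(\sigma_2,\beta_2)$ to reduce $I_3$ to $I_2$, absorbs $L_1^{1/2}$ into $\norm{u}_{X_\pm^{\sigma_1,\beta_1}}$ (lowering $\beta_1$ to $\beta_1-\tfrac12$), and places $M^{-1/2}$ on either factor. This yields the two product matrices
\[
P_2=\LHproduct{\sigma_0 & \sigma_1+\tfrac12 & \sigma_2 \\ \beta_0 & \beta_1-\tfrac12 & \beta_2},
\qquad
P_3=\LHproduct{\sigma_0 & \sigma_1 & \sigma_2+\tfrac12 \\ \beta_0 & \beta_1-\tfrac12 & \beta_2},
\]
which are checked directly against the \emph{full} Theorem~\ref{Atlas} (not Remark~\ref{AtlasRemark}, since now $b_1=\beta_1-\tfrac12<\tfrac12$). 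The crucial point you miss is that Theorem~\ref{Atlas} already accommodates $b_0=\beta_0>0$ on the output side, so neither the ``crude upgrade'' nor a duality recasting is needed. Your worry about \eqref{Atlas:2} is also unfounded: for $P_2$ one computes $\sum s_i+\sum b_i=\sum\sigma_i+\beta_0+\beta_1+\beta_2$, and since $\beta_1+\beta_2>1$, \eqref{Atlas:2} follows immediately from \eqref{NullformThm:5}. Moreover \eqref{Atlas:5} for $P_2$ reads $\sum\sigma_i\ge\tfrac12$, which holds \emph{strictly} by \eqref{NullformThm:5} and $\beta_0<\tfrac12$, so the ``not both equalities'' clause is automatic for $P_2,P_3$; the borderline analysis is confined entirely to $P_1$, where it matches \eqref{NullformThm:5}--\eqref{NullformThm:6} exactly as you describe.
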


\begin{proof}
Applying Lemma \ref{AngleEstimate}, and using also \eqref{HXH} and the symmetric nature of the conditions on $(\sigma_1,\beta_1)$ and $(\sigma_2,\beta_2)$, we reduce to checking that the following are products:
\begin{align*}
  P_1 &= \LHproduct{\sigma_0 & \beta_0 + \sigma_1  & \sigma_2 \\ 0 & \beta_1 & \beta_2},
  \\
  P_2 &= \LHproduct{\sigma_0 & \sigma_1+1/2 & \sigma_2 \\ \beta_0 & \beta_1-1/2 & \beta_2},
  \\
  P_3 &= \LHproduct{\sigma_0 & \sigma_1 & \sigma_2+1/2 \\ \beta_0 & \beta_1-1/2 & \beta_2}.
\end{align*}
But this follows from Theorem \ref{Atlas}, as one can readily check. Note that for $P_1$ the simplification in Remark \ref{AtlasRemark} applies.
\end{proof}

\subsection{Null form estimates II: Conclusion}

Using Theorem \ref{NullformThm} it is now a routine matter to check that the estimates \eqref{NullEst1:1}, \eqref{NullEst2:1} and \eqref{NullEst3:1} hold. In fact, the estimates in question correspond to the following matrices $\Hproduct{\sigma_0 & \sigma_1  & \sigma_2 \\ \beta_0 & \beta_1 & \beta_2}$:
\begin{align*}
  N_1 &= \LHproduct{1-s & s & s-1 \\ 1-b' & b & b},
  \\
  N_2 &= \LHproduct{1 & s & -1 \\ 1-b' & b & b},
  \\
  N_3 &= \LHproduct{1 & s-1 & s-1 \\ 1-b' & b & b},
\end{align*}
and checking against the conditions in Theorem \ref{NullformThm} gives the following conditions:
$$
  s > \max \left( \frac32 - b, b', \frac14 + b', \frac58 + \frac{b'}{2}, \frac56 - \frac{b}{3} \right),
$$
which is consistent with our assumption \eqref{Exponents}, provided $0 < \varepsilon \le 1/16$.

We remark that for $N_1$ and $N_2$, equality holds in \eqref{NullformThm:6}, so these estimates are sharp.

It remains to check the estimates \eqref{NullEst1:2}--\eqref{NullEst3:3}. Ignoring low-frequency issues for the moment, so that we can replace $\abs{\nabla} u$ by $\angles{\nabla} u$, we reduce to
\begin{align*}
  \norm{uv}_{H^{s-1,0}}
  &\lesssim \norm{u}_{H^{s,b}} \norm{v}_{H^{s+1,b}},
  \\
  \norm{uv}_{H^{s-1,0}}
  &\lesssim \norm{u}_{H^{s+2,b}} \norm{v}_{H^{s-1,b}},
  \\
  \norm{uv}_{H^{-1,0}}
  &\lesssim \norm{u}_{H^{s,b}} \norm{v}_{H^{1,b}},
  \\
  \norm{uv}_{H^{-1,0}}
  &\lesssim \norm{u}_{H^{s+2,b}} \norm{v}_{H^{-1,b}},
  \\
  \norm{uv}_{H^{-1,0}}
  &\lesssim \norm{u}_{H^{s-1,b}} \norm{v}_{H^{s+1,b}},
  \\
  \norm{uv}_{H^{-1,0}}
  &\lesssim \norm{u}_{H^{s+1,b}} \norm{v}_{H^{s-1,b}}.
\end{align*}
These estimates hold by Theorem \ref{Atlas}. In fact, the reduction in Remark \ref{AtlasRemark} applies. There is a lot of room in all the conditions except $\min_{i \neq j} (s_i+s_j) \ge 0$, which holds with equality for the second and fourth estimates.

It remains to prove \eqref{NullEst1:2}--\eqref{NullEst2:3} the case where $u$ is at low frequency, that is, $\abs{\xi} \le 1$ on the Fourier support of $u$. But then the frequency $\eta$ of $v$ is comparable to the output frequency $\xi+\eta$, in the sense that $\angles{\xi+\eta} \sim \angles{\eta}$, hence we reduce to
$$
  \norm{uv}_{L^2} \lesssim \norm{\abs{\nabla}u}_{L^2} \norm{v}_{H^{0,b}},
$$
but this follows by estimating
$$
  \norm{uv}_{L^2} \le
  \norm{u}_{L_t^2(L_x^\infty)} \norm{v}_{L_t^\infty(L_x^2)}
$$
followed by Sobolev embedding and the estimate
\begin{equation}\label{Hembedding}
  \norm{v}_{L_t^q(L_x^2)} \lesssim \bignorm{ \norm{\widetilde v(\tau,\xi)}_{L_\tau^{q'}} }_{L^2_\xi} \le \norm{\angles{\tau}^{-b}}_{L_\tau^{2 + 4/(q-2)}} \norm{v}_{H^{0,b}},
\end{equation}
valid for $2 \le q \le \infty$.

\subsection{Estimate for $\Pi(A,A,F)$} Here we prove \eqref{Trilinear}:
$$
  \norm{uvw}_{H^{-1,b'-1}} \lesssim \norm{u}_{H^{s,b}} \norm{v}_{H^{s,b}} \norm{w}_{H^{0,b}}.
$$
This holds for $s \ge 1/2+2\varepsilon$ by the estimates
\begin{align*}
  \norm{uw}_{H^{-1,b'-1}} &\lesssim \norm{u}_{H^{3\varepsilon,0}} \norm{w}_{H^{0,b}},
  \\
  \norm{uv}_{H^{3\varepsilon,0}} &\lesssim \bignorm{u}_{H^{1/2+2\varepsilon,b}} \bignorm{v}_{H^{1/2+2\varepsilon,b}},
\end{align*}
both of which hold by Theorem \ref{Atlas}.

\subsection{Estimate for $\Pi(A,A,A,A)$} We prove \eqref{Quadrilinear}:
\begin{align*}
  \norm{u_1u_2u_3u_4}_{H^{-1,0}}
  &\lesssim
  \norm{u_1u_2u_3u_4}_{L_t^2(L_x^{6/5})}
  \\
  &\le
  \norm{u_1}_{L_t^8(L_x^{24/5})} \norm{u_2}_{L_t^8(L_x^{24/5})} \norm{u_3}_{L_t^8(L_x^{24/5})} \norm{u_4}_{L_t^8(L_x^{24/5})}
  \\
  &\lesssim
  \norm{u_1}_{H^{7/8,b}} \norm{u_2}_{H^{7/8,b}} \norm{u_3}_{H^{7/8,b}} \norm{u_4}_{H^{7/8,b}}
\end{align*}
where we applied Sobolev embedding and \eqref{Hembedding}.

\bibliographystyle{amsplain} 
\bibliography{YMbibliography}

\end{document}